\documentclass[a4paper]{article}
\usepackage[english]{babel}
\usepackage{amsmath}
\title{Cyclic theories}
\author{Olivia Caramello\thanks{Supported by a CARMIN IH\'ES-IHP post-doctoral position} \textrm{ }and Nicholas Wentzlaff}
\date{June 20, 2014}

\usepackage[all]{xy}
\usepackage[T1]{fontenc}
\usepackage{tikz,color,makeidx, enumerate, stmaryrd}
\usetikzlibrary{matrix,arrows}

\mathcode`\.="313A  
\mathchardef\semicolon="603B 
\mathchardef\gt="313E
\mathchardef\lt="313C

\newcommand{\cod}
 {{\rm cod}}

\newcommand{\comp}
 {\circ}

\newcommand{\Cont}
 {{\bf Cont}}

\newcommand{\Gmod}
{\mathbb{G}\mbox{-mod}(\Set)}

\newcommand{\Gemod}
{\mathbb{G}_{e}\mbox{-mod}(\Set)}

\newcommand{\N}
{\mathbb{N}}

\newcommand{\Z}
{\mathbb{Z}}

\newcommand{\Q}
{\mathbb{Q}}

\newcommand{\R}
{\mathbb{R}}

\newcommand{\T}
{\mathbb{T}}

\newcommand{\G}
{\mathbb{G}}

\renewcommand{\O}
{\mathbb{O}}

 \newcommand{\kgen}
 {k-\rm{Gen}_{\vec{z}}(\vec{x}, \vec{y})}

\newcommand{\dom}
 {{\rm dom}}

\newcommand{\empstg}
 {[\,]}

\newcommand{\epi}
 {\twoheadrightarrow}

\newcommand{\hy}
 {\mbox{-}}

\newcommand{\im}
 {{\rm im}}

\newcommand{\imp}
 {\!\Rightarrow\!}

\newcommand{\Ind}[1]
 {{\rm Ind}\hy #1}

\newcommand{\mono}
 {\rightarrowtail}
 
\newcommand{\name}[1]
 {\mbox{$\ulcorner #1 \urcorner$}}

\newcommand{\ob}
 {{\rm ob}}

\newcommand{\op}
 {^{\rm op}}

\newcommand{\Set}
 {{\bf Set }}

\newcommand{\Sh}
 {{\bf Sh}}

\newcommand{\sh}
 {{\bf sh}}

\newcommand{\Sub}
 {{\rm Sub}}

\usepackage{mathrsfs}
\usepackage{amsmath,amsthm}
\usepackage{amssymb}

\newtheorem{theorem}{Theorem}[section]
\theoremstyle{definition}
\newtheorem{definition}[theorem]{Definition}
\newtheorem{lemma}[theorem]{Lemma}

\newtheorem{remarks}[theorem]{Remarks}
\newtheorem{remark}[theorem]{Remark}
\newtheorem{notations}[theorem]{Notations}

\newtheorem{example}[theorem]{Example}

\begin{document}

\maketitle
\begin{abstract}
We describe a geometric theory classified by Connes-Consani's epicylic topos and two related theories respectively classified by the cyclic topos and by the topos $[{\mathbb
N}^{\ast}, \Set]$. 
\end{abstract}

\tableofcontents

\section{Introduction}

This paper was motivated by the recent research course ``\emph{Le site \'epicyclique}'' held by A. Connes at the \emph{Coll\`ege de France}. The epicyclic topos was introduced, amongst other things, in the course as a natural setting, refining that provided by the cyclic topos, for studying the local factors of L-functions attached to arithmetic varieties through cohomology and non-commutative geometry. The content of Connes' lectures, which were based on joint work with C. Consani, should be shortly publicly available in written form (private communication); previous papers by Connes and Consani in connection with this research programme are \cite{CC2}, \cite{CC3}, \cite{CC4} and \cite{CC5}. 

In this paper, we study the cyclic and epicyclic toposes from a logical point of view, by using the techniques developed in \cite{OCPT}. More specifically, we describe a geometric theory classified by the epicylic topos and two related theories respectively classified by the cyclic topos and by the topos $[{\mathbb N}^{\ast}, \Set]$, where ${\mathbb N}$ is the multiplicative monoid of non-zero natural numbers, also considered by Connes and Consani (cf. for instance \cite{CC5}). Realizing these toposes as classifying toposes for logical theories is a potentially rich source of applications, which we plan to explore in a forthcoming article. 

The signature over which our theory $\T_E$ classified by the epicyclic topos will be axiomatized is essentially the one suggested by Connes during his course, namely that of oriented groupoids with the addition of a non-triviality predicate. In fact, one of the main results presented in the course was the characterization of the points of the epicyclic topos in terms of projective geometry over a semi-field of characteristic 1, using this language for describing the objects of the cyclic as well as of the epicyclic category. This characterization was obtained by means of a geometric but lengthy technical detour exploiting in particular the relationships between the epicyclic topos and the topos $[{\mathbb N}^{\ast}, \Set]$. The approach undertaken in this paper is instead of logical nature, and directly leads to explicit characterizations of the points of the toposes in question. This is achieved by identifying geometric theories which are classified by these toposes, and relies on the general framework of theories of
presheaf type comprehensively studied \cite{OCPT}. 

The plan of the paper is as follows.

In section 2, we describe a theory $\T_E$ classified by the epicyclic topos over the signature of oriented groupoids with non-triviality predicate. In section 3, we describe a theory $\T_C$ classified by the cyclic topos over a related signature, obtained from the former by adding a function symbol for formalizing the existence of unitary loops on the objects of the groupoid. In section 4, we describe a theory classified by the topos $[{\mathbb N}^{\ast}, \Set]$. As it turns out, this theory provides an axiomatization for the (non-trivial) ordered groups which are isomorphic to ordered subgroups of $({\mathbb Q}, {\mathbb Q}^{+})$, already shown in \cite{CC5} to correspond to the points of the topos $[{\mathbb N}^{\ast}, \Set]$.

\section{The epicyclic theory}\label{epic}

In this section we describe a theory classified by the epicyclic topos. 

The two crucial facts on which we will build our analysis are the following: 

\begin{enumerate}
\item The epicyclic category, originally introduced by Goodwillie in \cite{goodwillie}, can be realized as a category of oriented groupoids (i.e., groupoids with a notion of positivity on arrows which is preserved by composition and satisfied by all the identity arrows, cf. Definition \ref{def:ordered_groupoids} below) and order-preserving groupoid homomorphisms between them which are injective on loops (Connes-Consani, 2014, cf. Definition \ref{def:epicyclic} below);

\item Theorem 6.29 in \cite{OCPT} states that for any theory $\mathbb T$ of presheaf type (i.e., classified by a presheaf topos) and any full subcategory $\cal A$ of its category f.p.$\mathbb{T}\mbox{-mod}(\Set)$ of finitely presentable models, there exists a unique quotient ${\mathbb T}_{\cal A}$ of ${\mathbb T}$ classified by the subtopos $[{\cal A}, \Set]$ of the classifying topos ${\cal E}_{\mathbb T}=[\textrm{f.p.}\mathbb{T}\mbox{-mod}(\Set), \Set]$ of $\mathbb T$.
\end{enumerate}

Fact 1 allows to identify the epicyclic category as a full subcategory of the theory of partially oriented groupoids with non-triviality predicate introduced below in Definition \ref{def:ordered_groupoids}. Since this theory is of presheaf type (by Lemma \ref{lem:Gofpresheaf}), Fact 2 ensures the existence of a quotient of this theory classified by the epicyclic topos. The general results of \cite{OCPT}, specifically the main characterization theorem coupled with Theorems 5.3 and 5.7, or Theorem 6.32, provide explicit but generic axiomatizations for it. Our aim in this section will be that of finding a most economical list of non-redundant axioms for this theory, by exploiting the specific combinatorics of the objects in question.

\begin{definition}
	\label{def:ordered_groupoids}
	\begin{enumerate}[(a)]
		\item Let $G_{0}$ and $G_{1}$ be two sorts (corresponding to objects and arrows respectively). The \emph{language of oriented groupoids}
	$\mathcal{L}_{\mathbb{G}}$ is obtained by adding to the usual two-sorted signature of categories $(G_{0}, G_{1};
	\dom, \cod, 1, C)$ a unary function symbol $\mbox{inv}: G_{1} \rightarrow G_{1}$ expressing the inversion of arrows, commonly written as $f^{-1}$ instead of
	$\mbox{inv}(f)$, and a unary predicate $P$ for ``positivity'', expressing the notion of orientation for arrows. 	

\item	The \emph{geometric theory of oriented groupoids} $\bar {\mathbb{G}}$ is obtained by adding to the (cartesian) theory $\mathbb{C}$ of categories the following axioms:
	\begin{enumerate}[(i)]
		\item $\top \vdash_{f^{G_{1}}} f \comp f^{-1} = 1_{\cod (f)}$
		\item $\top \vdash_{f^{G_{1}}} f^{-1} \comp f = 1_{\dom (f)}$
		\item $\top \vdash_{a^{G_{0}}} P(1_{a})$
		\item $P(f) \wedge P(f') \wedge f'' = f' \comp f \vdash_{f,f',f''} P(f'')$.
		\item $P(f) \wedge P(f^{-1}) \vdash_{f} f=1_{\dom (f)}$
		\item $\top \vdash_{f} P(f) \vee P(f^{-1})$
	\end{enumerate}
	
	We call an oriented groupoid \emph{partial} if only axioms (i)-(v) are satisfied. We denote this theory by the symbol $\G$.

\item The language of oriented groupoids \emph{with non-triviality predicate $T$}, denoted $\mathcal L_{\G_T}$, is obtained by adding to $\mathcal L_{\G}$ a predicate $T$ on arrows, whose intended interpretation is the assertion that the given arrow is a non-identical endomorphism.
	
\item	The \emph{theory} $\bar \G_T$ of \emph{oriented groupoids with (non-triviality) predicate $T$} is obtained by adding to $\bar \G$ the following axioms for $T$: 
	\begin{enumerate}[(i)]
	\setcounter{enumii}{6}
		\item $T(f) \vdash_{f} \dom (f) = \cod (f)$
		\item $T(1_{x}) \vdash_{x} \bot$
		\item $T(f) \vdash_{f} T(f^{n})$ for all $n \in \Z \setminus \{ 0 \} $ 
		\item $T(f' \comp f) \vdash_{f, f'} T(f \comp f')$
		\item $\dom (f) = \cod (f) \vdash_{f} \left( f= 1_{\dom (f)} \right) \vee T(f)$
	\end{enumerate}

	The theory obtained from $\bar \G_T$ by omitting axioms (vi) and (xi) is called the theory of \emph{partially oriented groupoids with (non-triviality) predicate $T$}, and denoted by $\G_T$.
\end{enumerate}
\end{definition}

\begin{remarks}
	\begin{enumerate}[(a)]
		\item
	In the above sequents involving compositions of arrows we have deliberately omitted the formula requiring the domains and codomains to match for the composition to be defined in order to lighten the notation. This of course is not a problem since $f \comp f'$ is defined if and only if the ternary predicate $C(f,f',f\comp f')$ is satisfied.
\item Very often we will adhere to the common intuitive notation of writing $f:a \rightarrow b$ as a shorthand for the formula $\dom(f) = a \wedge \cod(f) = b$ in three variables
	$(f^{G_1},a^{G_0},b^{G_0})$.
\end{enumerate}
\end{remarks}

From the form of the above axiomatization one may note in passing the following

\begin{lemma}
	The theory $\mathbb{G}_T$ of partially oriented groupoids (with predicate $T$) is of presheaf type.
	\label{lem:Gofpresheaf}
\end{lemma}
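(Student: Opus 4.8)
The plan is to exhibit $\mathbb{G}_T$ as a cartesian theory up to a single ``falsity'' axiom, and then to remove the obstruction posed by that axiom by means of Fact 2 (Theorem 6.29 of \cite{OCPT}). First I would isolate the subtheory $\mathbb{G}_T^{-}$ obtained from $\mathbb{G}_T$ by deleting axiom (viii), and argue that $\mathbb{G}_T^{-}$ is cartesian. The theory $\mathbb{C}$ of categories is cartesian by hypothesis, and all of the axioms we adjoin to it --- namely (i)--(v), (vii), (ix) and (x) --- are \emph{Horn} sequents, hence cartesian. The only point requiring care is that composites and powers are not terms of the signature: the composite $f'\comp f$ and the iterated power $f^{n}$ must be unfolded through the composition predicate $C$. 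For instance axiom (ix) for $n=2$ becomes the Horn sequent $T(f)\wedge C(f,f,g)\vdash_{f,g}T(g)$, and the general case is obtained by chaining finitely many instances of $C$ (using $\mbox{inv}$ for negative exponents); here axiom (vii) guarantees that under the hypothesis $T(f)$ the relevant composites are indeed defined, so that the Horn encoding faithfully renders the intended axiom. Since adjoining Horn (hence cartesian) sequents to a cartesian theory yields again a cartesian theory, $\mathbb{G}_T^{-}$ is cartesian and therefore of presheaf type, every cartesian theory being classified by a presheaf topos (see \cite{OCPT}).

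It remains to reinstate axiom (viii), which is the genuine obstacle: $T(1_{x})\vdash_{x}\bot$ is not cartesian (it is not even Horn), so $\mathbb{G}_T$ is a proper quotient of $\mathbb{G}_T^{-}$ and is a priori only classified by a subtopos of its classifying topos. The strategy is to show that this subtopos is itself a presheaf topos. I would let $\mathcal{A}$ be the full subcategory of $\textrm{f.p.}\,\mathbb{G}_T^{-}\mbox{-mod}(\Set)$ consisting of those finitely presentable models that satisfy (viii), i.e.\ in which no identity arrow lies in the interpretation of $T$. Two observations drive the argument. First, since the antecedent $T(1_{x})$ of (viii) is a positive (Horn) formula and homomorphisms send identities to identities while preserving the interpretation of $T$, a model satisfying (viii) can receive a homomorphism only from models that also satisfy (viii). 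Secondly, satisfaction of (viii) is stable under filtered colimits, since these are computed pointwise in the locally finitely presentable category $\mathbb{G}_T^{-}\mbox{-mod}(\Set)$ and a $T$-identity in a colimit would already occur at some stage.

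Combining these observations, every model of $\mathbb{G}_T$ is a filtered colimit of objects of $\mathcal{A}$, and the objects of $\mathcal{A}$ are exactly the finitely presentable models of $\mathbb{G}_T$. I would then invoke Fact 2: applied to the presheaf-type theory $\mathbb{G}_T^{-}$ and the full subcategory $\mathcal{A}$, it yields a unique quotient $(\mathbb{G}_T^{-})_{\mathcal{A}}$ classified by the presheaf topos $[\mathcal{A},\Set]$. Since $(\mathbb{G}_T^{-})_{\mathcal{A}}$ and $\mathbb{G}_T$ are both quotients of $\mathbb{G}_T^{-}$ whose $\Set$-models are precisely the filtered colimits of objects of $\mathcal{A}$, they coincide, whence $\mathbb{G}_T$ is classified by $[\mathcal{A},\Set]$ and is of presheaf type. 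The delicate step, and the one I would write out in full, is exactly this last identification: one must verify that deleting and then reinstating (viii) matches the passage to the subcategory $\mathcal{A}$ prescribed by Theorem 6.29, and not merely produce a theory sharing the same set-based models.
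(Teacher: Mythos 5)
Your first half is correct and matches the paper: the only non-cartesian axiom of $\mathbb{G}_T$ is (viii), and the remaining axioms (i)--(v), (vii), (ix), (x) are cartesian once composites and powers are unfolded through the composition predicate $C$ (your use of axiom (vii) to guarantee that the relevant composites are defined is exactly the right care to take). Where you diverge is in how you reinstate axiom (viii), and there your argument has a genuine gap. The paper simply cites Theorem 6.28 of \cite{OCPT}, which states precisely that presheaf type is stable under the addition of geometric axioms of the form $\phi \vdash_{\vec x} \bot$; this closes the proof in one line. You instead try to recover this from Theorem 6.29, and your crucial identification of $(\mathbb{G}_T^{-})_{\mathcal{A}}$ with $\mathbb{G}_T$ rests on the claim that two quotients of $\mathbb{G}_T^{-}$ with the same $\Set$-models coincide. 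That claim is false in general: quotients of a presheaf-type theory correspond bijectively to subtoposes of its classifying topos, not to classes of $\Set$-models, and distinct subtoposes can have identical categories of points (for instance, a pointless nondegenerate subtopos and the degenerate subtopos both have no points at all). Sameness of $\Set$-models implies sameness of quotients only when one already knows that both classifying subtoposes have enough points; for $[\mathcal{A},\Set]$ this is automatic, but for the subtopos classifying $\mathbb{G}_T$ it is essentially the statement you are trying to prove, so the argument is circular at exactly the step you yourself flag as delicate.

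Put syntactically: by Theorem 6.29 (as it is used in the paper), $(\mathbb{G}_T^{-})_{\mathcal{A}}$ is the theory of \emph{all} geometric sequents valid in every model in $\mathcal{A}$, so identifying it with $\mathbb{G}_T$ amounts to a relative completeness assertion --- every sequent valid in all finitely presentable $\mathbb{G}_T^{-}$-models satisfying (viii) is provable from $\mathbb{G}_T^{-}$ together with (viii). Your two observations (that (viii) is inherited along homomorphisms into a model satisfying it, and that it is stable under filtered colimits) are correct, but they only establish the agreement of $\Set$-models, not this completeness claim, which is the entire content of the lemma. To repair the proof without citing Theorem 6.28 you would have to compute the Grothendieck topology on $(\textrm{f.p.}\,\mathbb{G}_T^{-}\mbox{-mod}(\Set))^{\mathrm{op}}$ induced by adding (viii) (the empty sieve covers exactly the models violating (viii)) and verify that its sheaves are the functors sending those models to the singleton, a category equivalent to $[\mathcal{A},\Set]$ precisely because no model violating (viii) admits a homomorphism to one satisfying it. That computation is in essence the proof of Theorem 6.28; the honest options are to cite that theorem, as the paper does, or to reprove it, and your proposal as written does neither.
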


\begin{proof}
	Note that except for axiom (viii), all the other axioms of $\mathbb{G}_T$ are \emph{cartesian}. Using the well known fact that cartesian theories are of presheaf type, as well as Theorem 6.28 in \cite{OCPT}, stating that the property of a theory to be of presheaf type is stable under adding geometric axioms of the form $\phi \vdash_{\vec x} \bot$, the result follows immediately.
\end{proof}

Further, note that every oriented groupoid can be seen as a model of $\G_{T}$ in the obvious way. Homomorphisms between oriented groupoids, regarded as models of $\G_{T}$, however correspond precisely to those functors between them which preserve the orientation and which are \emph{injective} on endomorphisms (due to the preservation of $T$). 

\begin{example}
	\label{ex:Xn}
	Let $H$ be an (ordered) group acting on a set $X$ via $\alpha: H \times X \rightarrow X$. One can associate to it an (oriented) groupoid $H \rtimes X$, by defining its
	objects to be the elements of $X$, and its arrows $x \rightarrow x'$ to be precisely those $h \in H$ for which $\alpha(h,x) = h.x = x'$. This means
	$\mbox{Hom}(x,x') = \{ h \in H \mid h.x = x' \}$. In particular composition (if defined) is obtained by composition in $H$: $(h'.x') \comp (h.x) = (h'h).x $. Notice that
	$H \rtimes X$ adopts the orientation from the order of $H$ and the orientation is total if and only if the order of $H$ is.
\end{example}

As we will see shortly, this example is generic for our case. In fact the epicyclic category $\tilde{\Lambda}$ will be viewed as a full subcategory of
f.p.$\mathbb{G}_T\mbox{-mod}(\Set)$, that is as a full subcategory of partially oriented groupoids with predicate $T$. Notice the importance of the \emph{fullness}
condition, explaining the need for the predicate $T$. The fullness allows us to apply Theorem 6.29 \cite{OCPT} in order to show the \emph{existence} of an extension of
$\mathbb{G}_T$ classified by the epicyclic topos $[\tilde{\Lambda},\Set]$; then, by applying techniques from the same paper, we shall explicitly axiomatize it. This will be achieved by requiring the extension to be \emph{total} and the existence of a specific
factorization system for arrows.

The following alternative definition of Goodwillie's epicyclic category is due to Connes and Consani.

\begin{definition}
	\label{def:epicyclic}
	\begin{enumerate}[(a)]
	
		\item The \emph{epicyclic category} $\tilde{\Lambda}$ is the full subcategory of the category $\mathbb{G}_T\mbox{-mod}(\Set)$ consisting of the oriented
			groupoids of the form $\mathbb{Z} \rtimes X$ for \emph{transitive} $\mathbb{Z}$-actions on \emph{finite} sets $X$.
		\item The \emph{epicyclic topos} is the category $[\tilde{\Lambda}, \Set]$ of set-valued functors on the category $\tilde{\Lambda}$.
	\end{enumerate}
\end{definition}

\begin{remarks}
	\label{rem:1}
	\begin{enumerate}[(a)]
		\item Notice that every oriented groupoid $X_{n}$ in $\tilde{\Lambda}$ inherits in fact a \emph{total} orientation from $\mathbb{Z}$. In particular all the morphisms in $\tilde{\Lambda}$
			are injective on endomorphisms (in fact on hom-sets, as can be seen from remark (iii) below).
		\item In what follows we shall frequently use the following canonical representation for objects $X_{n}$ in $\tilde{\Lambda}$. Pick any of its $n$ objects and call it $x_{0}$. The transitivity of the $\mathbb{Z}$-action, ensures, 1 being the free generator of $\mathbb{Z}$, that the following
			sequence
			\[
				x_{0} \overset{1.x_{0}}{\rightarrow} x_{1} \overset{1.x_{1}}{\rightarrow} \dots \overset{1.x_{n-2}}{\rightarrow} x_{n-1} \overset{1.x_{n-1}}{\rightarrow} x_{0}
			\]
			orders all the objects in $X_{n}$, and generates every arrow in $X_{n}$. Clearly every positive integer is obtained by successive applications of
			1. From now on we shall refer to this generating $n$-loop by $\vec{\xi} = (1.x_{0},\dots,1.x_{n-1})$.
		\item In fact every arrow $x_{i} \rightarrow x_{j}$ in $X_{n}$, corresponding to $m.x_{i}$ say, will admit the following \emph{unique factorization} in terms of
			$\vec{\xi}$: Let us suppose without loss of generality that $i < j$, and let $l_{i}$ be the unique minimal positive loop based at $x_{i}$, that is $l_{i} =
			1.x_{i-1} \comp \dots \comp 1.x_{i+1} \comp 1.x_{i}$. Then there is a unique number $b \ge 0$, namely $m = (j-i) + b \cdot n$, such that 
			\[
				m.x_{i} = 1.x_{j-1} \comp \dots \comp 1.x_{i} \comp (l_{i})^{b}
			\]
			While this may look obvious, the existence of factorization systems of this form will play a crucial role in what follows.
	\end{enumerate}
\end{remarks}

Before venturing on the undertaking of constructing a geometric theory $\mathbb{T}_{E}$ in the language $\mathcal{L}_{\mathbb G_T}$ of oriented groupoids with predicate
$T$ which is classified by the \emph{epicyclic topos} (hence the subscript $E$), let us first introduce some convenient notation for simplicity, exclusively remaining in
the language $\mathcal L _{\mathbb G}$ for the rest of the section. 
\begin{notations}
	\label{not:chapter1}
Here $\vec{x}, \vec{y}$ and $\vec{z}$ denote strings of $n, m$ and $k$ variables respectively, all of sort $G_{1}$ (arrows).

 $\mathbb N$ denotes the set of natural numbers, including $0$.
\begin{enumerate}
	\item Let $\oplus_{n}$ and $\ominus_{n}$ denote addition modulo $n$ in $\{0, 1, \dots, n-1\}$. If $n$ is clear from the context we will usually omit the
		subscript, and sometimes even write $+$ or $-$ instead.
	\item Let $P(\vec{x})$ denote the extension of the positivity predicate to strings of variables, that is
		\[
			\mathbf{P}(\vec{\mathbf{x}}): \space \bigwedge_{i=1}^{n} P(x_{i})
		\]
	\item Let $L_{n}(\vec{x})$ denote the predicate expressing that $\vec{x}$ is an $n$-loop
		\[
			\mathbf{L_{n}}(\vec{\mathbf{x}}): \space \cod(x_{1}) = \dom(x_{2}) \wedge \dots \wedge \cod(x_{n}) = \dom(x_{1})
		\]
	\item Let $l(\vec{x})$ be the term defined for loops $L_{n}(\vec{x})$ and expressing the successive application of the elements of $\vec{x}$
		\[
			\mathbf{l}(\vec{\mathbf{x}}) = x_{n} \comp \dots \comp x_{2} \comp x_{1}.
		\]
		More generally, for any loop $L_{n}(\vec{x})$ and any $i\in \{1, \ldots, n\}$, we denote by $l_{i}(\vec{x})$ the $n$-loop based at $\dom x_{i}$:
		\[
			\mathbf{l_{i}}(\vec{\mathbf{x}}) = x_{i\oplus (n-1)}\comp \cdots \comp x_{i}.
		\]
	\item Let $\tilde \Phi_{n}(\vec x)$ denote the following formula:
		\[
			\mathbf {\tilde \Phi_{n}} (\vec{\mathbf{x}}): \space L_{n}(\vec x) \wedge P(\vec x) \wedge T(l(\vec x))
		\]
		We call such a $\vec x$ a positive non-trivial $n$-loop, or simply $n$-loop for short. Notice that due to axiom (viii) in Definition
		\ref{def:ordered_groupoids}, $T(l(\vec x))$ ensures the validity of $T(l_{i} (\vec x))$ for all $i$. As it will be seen shortly, the objects $X_{n} \in
		\tilde \Lambda$ are finitely presented by the formulae $\tilde \Phi_{n}$ as models of $\G_T$.
	\item 	Let $k\hy {Gen}(\vec{x}, \vec{z})$ denote the proposition, defined for $\vec{x}$ and $\vec{z}$ positive loops, expressing that $\vec{z}$ is a $k$-loop generating the loop $\vec{x}$ by successive applications
		$\dots z_{i+1} \comp z_{i} \dots$, such that two successive arrows $x_{i}$ and $x_{i+1}$ are also generated successively, i.e. if $x_{i} = z_{s_{i} \oplus
		r_{i}} \comp \dots z_{s_{i}}$ and $x_{i+1} = z_{s_{i+1} \oplus r_{i+1}} \comp \dots z_{s_{i+1}}$, then $s_{i+1} \equiv s_{i} \oplus r_{i} $ (modulo $k$) and $\dom(x_{1})=\dom(z_{1})$. This gives:
		\begin{align*}
			& \mathbf{k}\hy{\bf Gen}(\vec{\mathbf{x}}, \vec{\mathbf{z}}):  \tilde\Phi_{k}(\vec z) \wedge
			\bigvee_{\substack{p_{1},\dots,p_{n} \in \N \\ 1 = \alpha_1 \le \dots \le \alpha_n \le k }} \bigg( \bigwedge_{\substack{i \in \{
			1,\dots,n \}}} \\ & \Big( x_{i} = z_{\alpha_{i+1} -1} \comp \cdots z_{\alpha_i} \comp \cdot l_{\alpha_i}^{p_{i}}			\Big)  \bigg) 
		\end{align*}	
		where modular addition has been omitted, but of course the $\alpha_i$'s and their arithmetic lie in $\Z / k \Z$.

	\item Let $k\hy {Gen}(\vec{x}, \vec{y}, \vec{z})$ denote the proposition, defined for positive loops $\vec{x}, \vec{y}$ and $\vec{z}$ expressing that $\vec{z}$ is a $k$-loop generating both the loops $\vec{x}$ and $\vec{y}$
		successively (in the sense of the previous point) with $\dom(x_{1})=\dom(z_{1})$. We denote by $C_{n}$ the group of cyclic permutations on $n$ letters, and for $\tau \in C_{n}$ let
		$\tau(\vec x) = (x_{\tau(1)},\dots,x_{\tau(n)})$. This gives: 
		\begin{align*}
			& \mathbf{k}\hy{\bf Gen}(\vec{\mathbf{x}}, \vec{\mathbf{y}}, \vec{\mathbf{z}}):  k \hy{Gen}(\vec x , \vec z) \wedge \bigvee_{\tau \in C_{m}} k
			\hy{Gen}(\tau(\vec y), \vec z)
		\end{align*}
\end{enumerate}
\end{notations}

\begin{example}
	\label{ex:A_5}
	The simple picture to keep in mind for these generating loops, are the generators $\vec \xi$ of $X_{n}$. For example consider $A_{5}$ in $\tilde \Lambda$:

	\begin{tikzpicture}
		\node at (0,0) (a0) {$a_0$};
		\node at (2,0) (a1) {$a_1$};
		\node at (4,0) (a2) {$a_2$};
		\node at (6,0) (a3) {$a_3$};
		\node at (8,0) (a4) {$a_4$};
		\draw[->] (a0) to[out=40,in=140] node[label=above:$x_1$] {} (a1); \draw[->] (a1) to[out=30,in=150] node[label=above:$x_2$] {} (a4); \draw[->] (a4) to[out=130,in=50]
		node[label=below:$x_3$] {} (a0);
		\draw[->] (a2) to[out=-40,in=-140] node[label=below:$y_1$] {} (a3); \draw[->] (a3) to[out=-40,in=-140] node[label=below:$y_2$] {} (a4); \draw[->] (a4) to[out=-130,in=-50]
		node[label=above:$y_3$] {} (a0); \draw[->] (a0) to[out=-40,in=-140] node[label=below:$y_4$] {} (a2);
		\draw[->] (a0) to node[label=above:$\xi_1$] {} (a1); 
		\draw[->] (a1) to node[label=above:$\xi_2$] {} (a2); 
		\draw[->] (a2) to node[label=above:$\xi_3$] {} (a3); 
		\draw[->] (a3) to node[label=above:$\xi_4$] {} (a4); 
	\end{tikzpicture}

	The two loops $\vec x = (x_1, x_2, x_3)$ and $\vec y = (y_1, y_2, y_3, y_4)$ are clearly generated by $\vec \xi$, namely 
	\begin{align*}
		(x_3, x_2, x_1) &= (\xi_5, \xi_4 \comp \xi_3 \comp \xi_2, \xi_1) &\mbox{ and} \\
		(y_4, y_3, y_2, y_1) &= (\xi_2 \comp \xi_1, \xi_5, \xi_4, \xi_3).
	\end{align*}

	Notice the \emph{successive} generation in terms of $\vec \xi$. 
\end{example}

Recall that a $\T$-model $M$ is \emph{presented} by the formula $\{ \vec x. \phi\}$ if and only if the model homomorphisms $M \rightarrow N$ stand in bijective
correspondence with the interpretations $ \llbracket \vec x. \phi \rrbracket_{N}$ of the formula $\phi (\vec x)$ in the model $N$, naturally in $N$. Applied to our situation of the theory
$\G_T$ of partially oriented groupoids one readily obtains the following result:

\begin{lemma}
	The oriented groupoid $X_{n}$ as a model of $\G_T$ is \emph{finitely presented} by the formula
	\[
		\big\{ (f_{1},\dots,f_{n}) . \tilde \Phi_{n} (\vec f ) \big\}.
	\]
	\label{lem:Xn_fin_pres}
\end{lemma}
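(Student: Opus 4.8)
The plan is to verify directly the universal property recalled just before the statement: for an arbitrary model $N$ of $\G_T$ I must exhibit a bijection, natural in $N$, between the homomorphisms $X_{n} \to N$ and the interpretation $\llbracket (f_{1},\dots,f_{n}).\tilde\Phi_{n}(\vec f)\rrbracket_{N}$. The natural candidate sends a homomorphism $h\colon X_{n}\to N$ to the tuple $h(\vec\xi):=(h(1.x_{0}),\dots,h(1.x_{n-1}))$ obtained by evaluating $h$ on the canonical generating loop $\vec\xi$ of Remark~\ref{rem:1}(b). First I would check this is well defined, i.e.\ that $h(\vec\xi)$ lies in $\llbracket\tilde\Phi_{n}\rrbracket_{N}$. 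In $X_{n}$ itself $\vec\xi$ satisfies $\tilde\Phi_{n}$: it is a loop by construction, each $1.x_{i}$ is positive since $1$ is positive in $\Z$, and $l(\vec\xi)$ is the non-identity endomorphism $n.x_{0}$ (the stabiliser of $x_{0}$ being $n\Z$), so $T(l(\vec\xi))$ holds. Since homomorphisms preserve $\dom,\cod,\comp,1,\mbox{inv}$ strictly and carry the predicates $P$ and $T$ forward, $h(\vec\xi)$ again satisfies $L_{n}\wedge P(-)\wedge T(l(-))$, as required.

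Injectivity of $h\mapsto h(\vec\xi)$ is immediate from Remark~\ref{rem:1}(b): the loop $\vec\xi$ generates every arrow of $X_{n}$ under composition and inversion, and it determines all the objects as domains and codomains; hence a homomorphism is completely determined by its values on $\vec\xi$.

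The substance of the argument is surjectivity. Given any tuple $\vec a=(a_{1},\dots,a_{n})$ with $N\models\tilde\Phi_{n}(\vec a)$, I would define a homomorphism $h$ with $h(\xi_{i})=a_{i}$ by transporting the canonical normal form of Remark~\ref{rem:1}(c). On objects set $h(x_{i}):=\cod(a_{i})=\dom(a_{i+1})$, which is consistent because $L_{n}(\vec a)$ holds; on a general arrow $m.x_{i}$ use its unique factorization from Remark~\ref{rem:1}(c) and declare $h(m.x_{i})$ to be the corresponding composite of the relevant $a_{k}$'s together with a power $(l_{i}(\vec a))^{b}$ in $N$ (using inverses for negative exponents). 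The crucial point making this assignment a well-defined functor is that the only relations among composites of the $\xi_{i}$ in $X_{n}$ are the loop relations together with the fact that the base loop has infinite order, i.e.\ $(l_{i}(\vec\xi))^{b}\ne 1$ for $b\ne 0$; this is matched in $N$ because $T(l(\vec a))$ gives $T(l_{i}(\vec a))$ for every $i$ (as observed in Notation~\ref{not:chapter1}(5)) and then $T((l_{i}(\vec a))^{b})$ for every $b\ne 0$ (axiom (ix)), whence $(l_{i}(\vec a))^{b}\ne 1$ by axiom (viii). Preservation of $P$ follows since each $a_{i}$ is positive and positivity is closed under composition (axiom (iv)), and preservation of $T$ holds because the $T$-arrows of $X_{n}$ are exactly the non-identity endomorphisms $(l_{i}(\vec\xi))^{b}$ with $b\ne 0$, whose images $(l_{i}(\vec a))^{b}$ satisfy $T$ by the same computation.

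Finally, naturality in $N$ is routine: for $g\colon N\to N'$ both sides commute with post-composition by $g$, since $g(h(\vec\xi))=(g\comp h)(\vec\xi)$ on one side while $g$ sends the defining composite of $\vec a$ to that of $g(\vec a)$ on the other. I expect the main obstacle to be the well-definedness check in the surjectivity step: one must confirm that the assignment on normal forms respects composition in every case, in particular across the ``wrap-around'' of the loop, where a product of two path-arrows must be rewritten using a factor $(l_{i}(\vec a))$, and that no relation beyond the infinite-order condition secured by $T$ through axioms (viii)--(x) is ever required.
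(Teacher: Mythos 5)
Your proposal is correct and takes essentially the same approach as the paper: homomorphisms $X_n \to N$ are sent to the images of the generating loop $\vec{\xi}$, which satisfy $\tilde\Phi_n$ by preservation of $P$, $L_n$ and $T$, and conversely a tuple $\vec{a}$ satisfying $\tilde\Phi_n$ is extended to a homomorphism via the factorizations of Remarks \ref{rem:1}(b),(c), with preservation of $P$ and $T$ secured by axioms (iv), (x) and (ix) of Definition \ref{def:ordered_groupoids}, exactly as in the paper's proof. The only deviation is your appeal to axiom (viii) to guarantee $(l_i(\vec{a}))^b \neq 1$ in $N$: this is correct but superfluous, since a homomorphism out of $X_n$ need only transport the relations and predicates holding in $X_n$, not its inequalities (indeed $X_n$ is the free groupoid on the cyclic quiver, so the extension is automatically a well-defined functor once $L_n(\vec{a})$ holds).
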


\begin{proof}
	Given a $\G_T$-model homomorphism $H: X_{n} \rightarrow G$, the string of images $H(\xi_{i})$ under $H$ of the generators	$\xi_{i} = \left( 1.x_{i-1} \right) : x_{i-1} \rightarrow x_{i}$ of $X_{n}$ satisfies the formula $\tilde \Phi_{n}$ as $H$, being a $\G_T$-model homomorphism, preserves $P$, $L_{n}$ and $T$.

	Conversely, suppose that we are given an element $\vec f$ of the interpretation of the formula $\tilde \Phi_{n}$ in a model $G$ of $\G_T$. Recalling the axioms for $\G_T$, in particular axioms (iv), (ix) and (x), we see that arbitrary compositions of $f_{i}$'s preserve $P$, that $T(l(\vec f))$ implies $T(l_{i}(\vec f))$ for all $i$, and that arbitrary non-zero powers of loops $l_{i}(\vec f)$ satisfy $T$. Now, since every positive arrow in $X_{n}$ is
	obtained by successive applications from $\vec \xi$ and every non-trivial endomorphism is obtained by non-zero powers of the primitive loops $l_{i}(\vec \xi)$ (see Remarks
	\ref{rem:1} (b) and (c)), the fact that the assignment $H(\xi_{i}) := f_{i}$ defines a $\G_T$-model homomorphism follows immediately.

	Clearly, these correspondences are inverse to each other and natural in $G$, as required.
\end{proof}

Notice that all the hitherto established results have put us in a position to exploit the techniques developed in \cite{OCPT} in order to show the existence,
and describe an axiomatization, of a theory $\T_{E}$ classified by the epicyclic topos. Indeed, from Lemma \ref{lem:Gofpresheaf} we know that the theory $\G_T$ is of presheaf
type; in particular its classifying topos can be represented as $\mathcal E _{\G_T} \simeq [\mbox{f.p.}\G_T \mbox{-mod}(\Set) , \Set]$. Also, by definition \ref{def:epicyclic}, $\tilde
\Lambda$ is a \emph{full} subcategory of $\G_T \mbox{-mod}(\Set)$, hence by Lemma \ref{lem:Xn_fin_pres} in particular of f.p.$\G_T \mbox{-mod}(\Set)$. It readily follows
from Theorem 6.29 in \cite{OCPT} that the full theory $\T_{E}$ of all geometric sequents, expressed in the language $\mathcal L _{\G_T}$, which are valid in all models
$X_{n} \in \tilde \Lambda$, is classified by the epicyclic topos.

In order to find an economical axiomatization of $\T_{E}$, all one needs to do now is to find geometric sequents in $\mathcal L_{\G_T}$ which are satisfied in every
$X_{n} \in \tilde \Lambda$ and which entail the validity of the five (schemes of) sequents in Theorem 6.32 (\cite{OCPT}). For the reader's convenience, we report the statement of the part of Theorem 6.32 which is relevant for our purposes. Note that, as it is clear from its proof, the terms referred to in the statement of the theorem are exclusively used for representing elements of the models in $\cal K$, so they could be replaced by more general $\mathbb T$-provably functional predicates if the latter are what is needed to represent such elements in terms of the generators of the model in question. In fact, in our case, the terms used are actually only `partially defined' terms (since the composition law in a groupoid is only partially defined). 

\begin{theorem}[cf. Theorem 6.32 \cite{OCPT}]\label{extending}
Let $\mathbb T$ be a theory of presheaf type over a signature $\Sigma$ and $\cal K$ a full subcategory of the category of set-based $\mathbb T$-models such that every $\mathbb T$-model in $\cal K$ is both finitely presented and finitely generated (with respect to the same generators). Then the following sequents, added to the axioms of $\mathbb T$, yield an axiomatization of the theory ${\mathbb T}_{\cal K}$ (where we denote by $\cal P$ the set of geometric formulae over $\Sigma$ which present a $\mathbb T$-model in $\cal K$): 

\begin{enumerate}[(i)]
\item The sequent 
\[
(\top \vdash_{[]} \mathbin{\mathop{\textrm{ $\bigvee$}}\limits_{\phi(\vec{x})\in {\cal P}}} (\exists \vec{x})\phi(\vec{x}));
\]

\item For any formulae $\phi(\vec{x})$ and $\psi(\vec{y})$ in $\cal P$, where $\vec{x}=(x_{1}^{A_{1}}, \ldots, x_{n}^{A_{n}})$ and $\vec{y}=(y_{1}^{B_{1}}, \ldots, y_{m}^{B_{m}})$, the sequent  
\[
(\phi(\vec{x}) \wedge \psi(\vec{y}) \vdash_{\vec{x}, \vec{y}} \mathbin{\mathop{\textrm{ $\bigvee$}}\limits_{\substack{\chi(\vec{z})\in {\cal P}, t_{1}^{A_{1}}(\vec{z}), \ldots, t_{n}^{A_{n}}(\vec{z}) \\ s_{1}^{B_{1}}(\vec{z}), \ldots, s_{m}^{B_{m}}(\vec{z})} } (\exists \vec{z})(\chi(\vec{z}) \wedge \mathbin{\mathop{\textrm{ $\bigwedge$}}\limits_{\substack{i\in \{1, \ldots, n\}, \\ {j\in \{1, \ldots, m\}}}} (x_{i}=t_{i}(\vec{z}) \wedge y_{j}=s_{j}(\vec{z})))}}),
\]
where the disjunction is taken over all the formulae $\chi(\vec{z})$ in $\cal P$ and all the sequences of terms $t_{1}^{A_{1}}(\vec{z}), \ldots, t_{n}^{A_{n}}(\vec{z})$ and $s_{1}^{B_{1}}(\vec{z}), \ldots, s_{m}^{B_{m}}(\vec{z})$ whose output sorts are respectively $A_{1}, \ldots, A_{n}, B_{1}, \ldots, B_{m}$ and such that, denoting by $\vec{\xi}$ the set of generators of the model $M_{\{\vec{z}. \chi\}}$ finitely presented by the formula $\chi(\vec{z})$, $(t_{1}^{A_{1}}(\vec{\xi}), \ldots, t_{n}^{A_{n}}(\vec{\xi}))\in [[\vec{x}. \phi]]_{M_{\{\vec{z}. \chi\}}}$ and $(s_{1}^{B_{1}}(\vec{\xi}), \ldots, s_{m}^{B_{m}}(\vec{\xi}))\in [[\vec{y}. \psi]]_{M_{\{\vec{z}. \chi\}}}$;

\item For any formulae $\phi(\vec{x})$ and $\psi(\vec{y})$ in $\cal P$, where $\vec{x}=(x_{1}^{A_{1}}, \ldots, x_{n}^{A_{n}})$ and $\vec{y}=(y_{1}^{B_{1}}, \ldots, y_{m}^{B_{m}})$, and any terms $t_{1}^{A_{1}}(\vec{y}), s_{1}^{A_{1}}(\vec{y}), \ldots, t_{n}^{A_{n}}(\vec{y}), s_{n}^{A_{n}}(\vec{y})$ whose output sorts are respectively $A_{1}, \ldots, A_{n}$, the sequent 

\begin{equation*}
\begin{split}
(\mathbin{\mathop{\textrm{ $\bigwedge$}}\limits_{i\in \{1, \ldots, n\}} (t_{i}(\vec{y})=s_{i}(\vec{y}))} \wedge \phi(t_{1}\slash x_{1}, \ldots, t_{n}\slash x_{n}) \wedge \phi(s_{1}\slash x_{1}, \ldots, s_{n}\slash x_{n}) \wedge \psi(\vec{y}) \\
\vdash_{\vec{y}} \mathbin{\mathop{\textrm{ $\bigvee$}}\limits_{\chi(\vec{z})\in {\cal P}, u_{1}^{B_{1}}(\vec{z}), \ldots, u_{m}^{B_{m}}(\vec{z})}} ((\exists \vec{z})(\chi(\vec{z}) \wedge \mathbin{\mathop{\textrm{ $\bigwedge$}}\limits_{j\in \{1, \ldots, m\}}} (y_{j}=u_{j}(\vec{z}))),
\end{split}
\end{equation*}
where the disjunction is taken over all the formulae $\chi(\vec{z})$ in $\cal P$ and all the sequences of terms $u_{1}^{B_{1}}(\vec{z}), \ldots, u_{m}^{B_{m}}(\vec{z})$ whose output sorts are respectively $B_{1}, \ldots, B_{m}$ and such that, denoting by $\vec{\xi}$ the set of generators of the model $M_{\{\vec{z}. \chi\}}$ finitely presented by the formula $\chi(\vec{z})$, $(u_{1}^{B_{1}}(\vec{\xi}), \ldots, u_{m}^{B_{m}}(\vec{\xi}))\in [[\vec{y}. \psi]]_{M_{\{\vec{z}. \chi\}}}$ and $t_{i}(u_{1}(\vec{\xi}), \ldots, u_{m}(\vec{\xi}))=s_{i}(u_{1}(\vec{\xi}), \ldots, u_{m}(\vec{\xi}))$ in $M_{\{\vec{z}. \chi\}}$ for all $i\in \{1, \ldots, n\}$;

\item For any sort $A$ over $\Sigma$, the sequent 
\[
(\top \vdash_{x_{A}} \mathbin{\mathop{\textrm{ $\bigvee$}}\limits_{\chi(\vec{z})\in {\cal P}, t^{A}(\vec{z})}} (\exists \vec{z})(\chi(\vec{z}) \wedge x=t(\vec{z}))),
\]
where the the disjunction is taken over all the formulae $\chi(\vec{z})$ in $\cal P$ and all the terms $t^{A}(\vec{z})$  whose output sort is $A$;

\item For any sort $A$ over $\Sigma$, any formulae $\phi(\vec{x})$ and $\psi(\vec{y})$ in $\cal P$, where $\vec{x}=(x_{1}^{A_{1}}, \ldots, x_{n}^{A_{n}})$ and $\vec{y}=(y_{1}^{B_{1}}, \ldots, y_{m}^{B_{m}})$, and any terms $t^{A}(\vec{x})$ and $s^{A}(\vec{y})$, the sequent 

\begin{equation*}
\begin{split}
(\phi(\vec{x}) \wedge \psi(\vec{y}) \wedge t(\vec{x})=s(\vec{y})
\vdash_{\vec{x}, \vec{y}}
\mathbin{\mathop{\textrm{ $\bigvee$}}\limits_{\substack{\chi(\vec{z})\in {\cal P}, p_{1}^{A_{1}}(\vec{z}), \ldots, p_{n}^{A_{n}}(\vec{z}) \\ q_{1}^{B_{1}}(\vec{z}), \ldots, q_{m}^{B_{m}}(\vec{z})} } (\exists \vec{z})(\chi(\vec{z})} \wedge \\ \wedge \mathbin{\mathop{\textrm{ $\bigwedge$}}\limits_{\substack{i\in \{1, \ldots, n\},\\ {j\in \{1, \ldots, m\}}}} (x_{i}=p_{i}(\vec{z}) \wedge y_{j}=q_{j}(\vec{z}))) },
\end{split}
\end{equation*}
where the disjunction is taken over all the formulae $\chi(\vec{z})$ in $\cal P$ and all the sequences of terms $p_{1}^{A_{1}}(\vec{z}), \ldots, p_{n}^{A_{n}}$ and $q_{1}^{B_{1}}(\vec{z}), \ldots, q_{m}^{B_{m}}(\vec{z})$ whose output sorts are respectively $A_{1}, \ldots, A_{n}, B_{1}, \ldots, B_{m}$ and such that, denoting by $\vec{\xi}$ the set of generators of the model $M_{\{\vec{z}. \chi\}}$ finitely presented by the formula $\chi(\vec{z})$, $(p_{1}^{A_{1}}(\vec{\xi}), \ldots, p_{n}^{A_{n}}(\vec{\xi}))\in [[\vec{x}. \phi]]_{M_{\{\vec{z}. \chi\}}}$ and $(q_{1}^{B_{1}}(\vec{\xi}), \ldots, q_{m}^{B_{m}}(\vec{\xi}))\in [[\vec{y}. \psi]]_{M_{\{\vec{z}. \chi\}}}$ and $t(p_{1}(\vec{\xi}), \ldots, p_{n}(\vec{\xi}))=s(q_{1}(\vec{\xi}), \ldots, q_{m}(\vec{\xi}))$ in $M_{\{\vec{z}. \chi\}}$.
\end{enumerate} 
\end{theorem}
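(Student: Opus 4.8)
The plan is to translate the statement into topos-theoretic terms and argue at the level of Grothendieck topologies. Since $\mathbb{T}$ is of presheaf type, its classifying topos is $\mathcal{E}_{\mathbb{T}} \simeq [\mathcal{C}, \Set]$ with $\mathcal{C} = \textrm{f.p.}\,\mathbb{T}\mbox{-mod}(\Set)$, and by the result recalled as Fact~2 (Theorem 6.29 of \cite{OCPT}) the full subcategory ${\cal K} \subseteq \mathcal{C}$ determines a unique quotient $\mathbb{T}_{\cal K}$ whose classifying topos is the subtopos $[{\cal K}, \Set] \hookrightarrow [\mathcal{C}, \Set]$. As quotients of $\mathbb{T}$ correspond bijectively to subtoposes of $\mathcal{E}_{\mathbb{T}}$, hence to Grothendieck topologies on $\mathcal{C}\op$, I would write $J_{\cal K}$ for the topology associated with $\mathbb{T}_{\cal K}$. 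Invoking the dictionary of \cite{OCPT} between covering sieves on a finitely presented model $M_{\{\vec{z}. \chi\}}$ and geometric sequents having $\chi(\vec{z})$ in the antecedent, the whole assertion reduces to proving that the topology generated by the covering families read off from the sequents (i)--(v) coincides with $J_{\cal K}$.

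The guiding intuition is that the $\Set$-models of $\mathbb{T}_{\cal K}$ are exactly the $\mathbb{T}$-models $M$ whose associated point of $\mathcal{E}_{\mathbb{T}}$ factors through $[{\cal K}, \Set]$, equivalently those $M$ that arise as filtered colimits of objects of ${\cal K}$. I would read the five schemes as the elementary conditions expressing this kind of \emph{${\cal K}$-flatness}, sort by sort. Sequents (i) and (iv) express \emph{generation}: the empty-context sequent (i) forces $M$ to receive a map from some object of ${\cal K}$, while (iv) forces every element of every sort of $M$ to be the value of a term on the generators of some ${\cal K}$-object. Sequent (ii) expresses the \emph{amalgamation} property: any two objects of ${\cal K}$ mapping into $M$ factor through a common third object of ${\cal K}$. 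Sequents (iii) and (v) express the \emph{effectivity} of the relations, namely that an equality between terms holding in $M$ is already witnessed inside a single object of ${\cal K}$. Jointly these are precisely the conditions making the comma category of objects of ${\cal K}$ over $M$ filtered and cofinal, i.e.\ making $M$ a filtered colimit of objects of ${\cal K}$.

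For soundness I would verify directly that each of (i)--(v) holds in every $X \in {\cal K}$, whence each is provable in $\mathbb{T}_{\cal K}$ and the generated topology is contained in $J_{\cal K}$. This is where both hypotheses on ${\cal K}$ are used. Since $X$ is finitely \emph{presented} by some $\chi \in {\cal P}$, an element of $\llbracket \vec{x}. \phi \rrbracket_{X}$ is the same datum as a homomorphism from the $\phi$-presented model into $X$, so the witnessing formula $\chi$ and the accompanying terms exhibiting the required factorisations can be produced by evaluating on the generators $\vec{\xi}$ of $X$. Crucially, because $X$ is finitely \emph{generated} by these same generators, every element is genuinely of the form $t(\vec{\xi})$ for an honest term $t$, rather than merely for a provably functional predicate (cf.\ the remark preceding the statement); this is what makes the term-based form of the sequents available.

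The hard part will be the converse inclusion, that the covers coming from (i)--(v) already generate the whole of $J_{\cal K}$ under the maximality, stability and transitivity axioms of a Grothendieck topology. Concretely I would show that any $\mathbb{T}$-model $M$ validating (i)--(v) is ${\cal K}$-flat: using (iv) to present each element of $M$ as a term on the generators of a ${\cal K}$-object, (ii) to amalgamate finitely many such ${\cal K}$-objects over $M$ into a single one, and (iii) and (v) to force the resulting diagram of ${\cal K}$-objects over $M$ to be filtered, so that $M$ is their colimit and hence a point of $[{\cal K}, \Set]$. Making this uniform --- not merely for $\Set$-points but for models in an arbitrary Grothendieck topos, so that the two quotients really coincide as theories, i.e.\ so that the two topologies coincide --- is the delicate step, and it is exactly here that finite generation is indispensable: it guarantees that condition (iv) reaches \emph{every} element of $M$ by a term, and thereby that the term-based covering families attached to (i)--(v) are cofinal in, and so generate, $J_{\cal K}$.
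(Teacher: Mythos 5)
A preliminary remark for calibration: the paper itself contains no proof of this statement. It is reproduced from Theorem 6.32 of \cite{OCPT}, and the only commentary offered is that sequents (i)--(iii) are syntactic reformulations of the flatness conditions of Theorem 5.3 of \cite{OCPT}, while (iv)--(v) are syntactic reformulations of the isomorphism conditions of Theorem 5.7. Measured against that intended proof, your framing is correct in outline: the correspondence between quotients of $\mathbb{T}$ and subtoposes (hence Grothendieck topologies) of $[\textrm{f.p.}\,\mathbb{T}\mbox{-mod}(\Set),\Set]$, the semantic reading of the five schemes over $\Set$ (filteredness of the category of ${\cal K}$-objects over $M$, together with the requirement that $M$ be the colimit of that diagram), the soundness verification in each $X\in{\cal K}$, and the identification of where finite generation is indispensable (elements must be values of honest terms at the generators, not merely of $\mathbb{T}$-provably functional predicates) are all accurate and consistent with the strategy of \cite{OCPT}.

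The genuine gap is that the completeness direction --- which is the entire mathematical content of the theorem --- is announced but never executed. You correctly reduce the problem to showing that the topology generated by the covers read off from (i)--(v) contains the whole of $J_{\cal K}$, and you describe how a $\Set$-model validating (i)--(v) is exhibited as a filtered colimit of ${\cal K}$-objects; but you then concede that making this uniform over models in an arbitrary Grothendieck topos is ``the delicate step'' and stop there. That step cannot be bypassed or absorbed into the rest of the argument: ``filtered colimit of objects of ${\cal K}$'' has no direct meaning for a model in a general topos $\mathcal{E}$, and agreement of $\Set$-points does not imply agreement of subtoposes unless one already knows that the quotient axiomatized by (i)--(v) has enough points, which is precisely what is not available a priori --- this is why the reduction to topologies that you set up is needed, and it is exactly the point at which your argument is left unresolved. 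The proof in \cite{OCPT} handles this by the machinery of extensions of flat functors (its Section 5): for a model in an arbitrary $\mathcal{E}$ it characterizes, in terms valid internally, when the associated flat functor restricts/extends compatibly along ${\cal K}\hookrightarrow \textrm{f.p.}\,\mathbb{T}\mbox{-mod}(\Set)$, namely via flatness of the extension (Theorem 5.3) and invertibility of a canonical comparison (Theorem 5.7), and then verifies that validity of the five schemes is equivalent to these conditions. Some such argument, uniform in $\mathcal{E}$, is what your proposal is missing; without it one has only an inclusion of theories and an equality of $\Set$-models, not the asserted axiomatization of ${\mathbb T}_{\cal K}$.
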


It may be illuminating to note that these five schemes of sequents are actually just syntactic reformulations of the first two flatness and isomorphism conditions of Theorem 5.1 (\cite{OCPT}). Specifically, sequents (i),(ii) and (iii)
correspond to syntactic formulations of the conditions of Theorem 5.3, and sequents (iv) and (v) to syntactic formulations of those of Theorem 5.7. 

\begin{theorem} 
	The geometric theory $\mathbb{T}_{E}$ classified by the epicyclic topos $[\tilde{\Lambda},\Set]$ is obtained by adding to the theory $\bar \G_T$ of oriented groupoids
	the following axioms:
	\begin{enumerate}[(i)]
		\item $\top \vdash_{[]} (\exists a^{G_{0}})(a=a) $
		\item $P(f) \vdash_{f^{G_{1}}} (\exists g) (P(g) \wedge T(g \comp f) \big) $		
		\item For all $m,n \in \N$: \\
			$\tilde \Phi_{n} (\vec x) \wedge \tilde \Phi_{m}(\vec y) \vdash_{\vec{x}, \vec{y}}
			\bigvee_{k \le n+m} (\exists \vec{z}) ( k\hy {Gen}(\vec{x}, \vec{y}, \vec{z}))$ 
		\item For all $k\ge 2$ and distinct $i, j\in \{1, \ldots, k\}$:\\ 
			$\tilde \Phi_{k}(\vec z) \wedge (\dom( z_{i}) = \dom (z_{j})) \vdash_{\vec{z}} (\exists \vec{w}) \big(\tilde \Phi_{k-1}(\vec w) \wedge (k-1)\hy
			{Gen}(\vec{z},\vec{w})  \big)$
	\end{enumerate} 
	\label{thm:axiomatization}
	We call $\mathbb{T}_{E}$ the `epicyclic theory'.
\end{theorem}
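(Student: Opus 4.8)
The plan is to show that the two geometric theories $\T_E$ and $S := \bar\G_T + \{(\mathrm{i})\text{--}(\mathrm{iv})\}$ prove the same sequents, exploiting the two descriptions of $\T_E$ already at hand: as the theory of all geometric sequents in $\mathcal L_{\G_T}$ valid in every $X_n \in \tilde\Lambda$ (from the discussion preceding Theorem~\ref{extending}), and as the explicit axiomatization furnished by Theorem~\ref{extending} applied to $\mathbb T = \G_T$, $\mathcal K = \tilde\Lambda$ and presenting set $\mathcal P = \{\tilde\Phi_n : n \ge 1\}$, whose hypotheses hold by Lemma~\ref{lem:Xn_fin_pres} since each $X_n$ is finitely presented and finitely generated by the same generators $\vec\xi$. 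I would then establish $\T_E \vdash S$ and $S \vdash \T_E$ separately.

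For $\T_E \vdash S$ it suffices, since $\T_E$ consists exactly of the sequents valid in all $X_n$, to check that the extra axioms (vi), (xi) of $\bar\G_T$ and the four new axioms (i)--(iv) are valid in each $X_n = \Z \rtimes X$. Axioms (vi) and (xi) hold because the orientation inherited from $\Z$ is total (Remark~\ref{rem:1}(a)) and every endomorphism, being a multiple of the basepoint period, is either an identity or a $T$-arrow; axiom (i) holds as $X_n$ is inhabited; and axiom (ii) holds because a positive arrow can be completed to a positive non-trivial loop by composing it with the complementary arc of the canonical generator $\vec\xi$. The genuine content is the validity of (iii) and (iv), which is precisely the combinatorics of Remarks~\ref{rem:1}(b)--(c) and Example~\ref{ex:A_5}: any two positive non-trivial loops are generated successively by a common loop, obtainable as a common refinement and hence of length at most $n + m$, while a loop revisiting a basepoint collapses to a generating loop of length one less. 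This gives $\tilde\Lambda \subseteq S\mbox{-mod}(\Set)$ and therefore $\T_E \vdash S$.

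For $S \vdash \T_E$, since $S \supseteq \G_T$, by Theorem~\ref{extending} it is enough to derive the five schemes (i)--(v) from $S$. Schemes (i) and (iv), the existence conditions, follow from axioms (i), (ii), (vi): applying (ii) to $1_a$ for an object $a$ supplied by (i) yields a positive non-trivial $1$-loop, settling scheme (i); every object is then of the form $\dom(z_1)$ for such a $1$-loop, and every arrow, positive or (by (vi)) the inverse of one, is realized as a generator of a suitable $2$-loop, settling scheme (iv). Scheme (ii) is the syntactic shadow of axiom (iii): each disjunct $k\hy{Gen}(\vec x, \vec y, \vec z)$ names the presenting formula $\chi = \tilde\Phi_k$ together with the factorization terms expressing the $x_i, y_j$ as composites and powers of the $z$'s, and one checks that evaluating these terms at the generators $\vec\xi$ of $X_k$ lands in $[[\vec x.\tilde\Phi_n]]_{X_k}$ and $[[\vec y.\tilde\Phi_m]]_{X_k}$, which is exactly the successivity encoded in $k\hy{Gen}$. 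Finally the functionality schemes (iii) and (v) follow from combining axioms (iii) and (iv) with the uniqueness of factorization in the $X_k$ (Remark~\ref{rem:1}(c)): any coincidence of term-expressions already holds in a presenting model, and (iv) permits passage to a reduced common generator in which this witness is manifest.

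The main obstacle I anticipate is concentrated in axiom (iii) and its two roles. On the soundness side, proving that any two positive non-trivial loops in $X_n$ admit a common generating loop of length at most $n + m$ --- and that the intricate modular bookkeeping in the definition of $k\hy{Gen}$ (the indices $\alpha_i$, the powers $p_i$, and the realignment of the two loops, which sit at possibly different vertices of the common generator and must be matched through the cyclic permutations appearing in $k\hy{Gen}$) genuinely describes a loop inside $X_n$ --- requires a careful analysis of how the two loops overlap as subdivisions of a common cycle. On the completeness side, matching the rigid existential form of the generic schemes (ii), (iii), (v), whose disjunctions range over all presenting formulae and all compatible term sequences, against the single bounded disjunction provided by $k\hy{Gen}$ demands showing that no presenting datum beyond the reach of (iii) and (iv) is ever needed; this is exactly where the length bound $k \le n+m$ and the reduction axiom (iv) must be combined to close off the otherwise unbounded disjunctions.
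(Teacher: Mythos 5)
Your overall architecture is exactly that of the paper's proof: first check that the axioms of $\bar \G_T$ together with (i)--(iv) hold in every $X_n \in \tilde \Lambda$ (so that the candidate theory is contained in $\T_E$, the theory of all sequents valid in the models of $\tilde\Lambda$), then show that these axioms entail the five schemes of Theorem \ref{extending}, with the very same assignment of axioms to schemes: (i)+(ii) for schemes (i) and (iv), axiom (iii) plus a compatibility lemma for scheme (ii), and axioms (iii)+(iv) for schemes (iii) and (v). Your soundness argument for axiom (iii) via a common refinement through the objects visited by $\vec x$ and $\vec y$ is correct, and is, if anything, more careful than the paper's own text on this point: taking the full canonical loop of the ambient groupoid need not respect the bound $k \le n+m$, and it is precisely the refinement that makes the bound hold.

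The one genuine gap is in your justification of schemes (iii) and (v). You ground them in ``the uniqueness of factorization in the $X_k$ (Remark \ref{rem:1}(c))'', but that uniqueness lives in the wrong model and points in the wrong direction. The equation $t_i(\vec y) = s_i(\vec y)$ that must be transported holds in an \emph{arbitrary} model $M$ of the candidate theory; to conclude that the corresponding term-expressions agree in $X_k$, one needs that two successive decompositions with respect to the minimal generator $\vec z$ which denote the same element \emph{of $M$} have identical parameters --- that is, uniqueness of decompositions inside $M$ itself. This is the paper's Lemma \ref{lem:unique_k-decomposition}, and its proof is where the predicate $T$ earns its keep: minimality of $\vec z$ (all domains distinct, obtained via axiom (iv)) pins down the indices, and axioms (viii), (ix), (x) of $\bar\G_T$ force the exponents to agree, since otherwise some non-zero power $l_j(\vec z)^{p-p'}$ would be an identity arrow satisfying $T$. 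By contrast, uniqueness of factorization inside $X_k$ only says that evaluation at $\vec \xi$ is injective on normal forms, which lets one pull equalities back from $X_k$, not push an equality holding in $M$ forward into $X_k$. A second ingredient your sketch omits, needed before any uniqueness can be invoked, is that arbitrary terms built from the components of a loop --- which may involve inverses, since the signature contains $\mbox{inv}$ --- reduce to successive normal form at all: this is the paper's Lemma \ref{lemcompterm}, showing that successive terms are closed under composition and inversion. With these two lemmas substituted for your appeal to Remark \ref{rem:1}(c), your outline coincides with the paper's proof.
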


\begin{remark}
	The important point in this axiomatization is that it entails the existence of a generator in which every domain occurs only \emph{once}. With axiom (iii) alone this would
	clearly not be the case. Reconsidering the two loops $\vec x$ and $\vec y$ of Example \ref{ex:A_5}

		\begin{tikzpicture}
		\node at (0,0) (a0) {$a_0$};
		\node at (2,0) (a1) {$a_1$};
		\node at (4,0) (a2) {$a_2$};
		\node at (6,0) (a3) {$a_3$};
		\node at (8,0) (a4) {$a_4$};
		\draw[->] (a0) to[out=40,in=140] node[label=above:$x_1$] {} (a1); \draw[->] (a1) to[out=30,in=150] node[label=above:$x_2$] {} (a4); \draw[->] (a4) to[out=130,in=50]
		node[label=below:$x_3$] {} (a0);
		\draw[->] (a2) to[out=-40,in=-140] node[label=below:$y_1$] {} (a3); \draw[->] (a3) to[out=-40,in=-140] node[label=below:$y_2$] {} (a4); \draw[->] (a4) to[out=-130,in=-50]
		node[label=above:$y_3$] {} (a0); \draw[->] (a0) to[out=-40,in=-140] node[label=below:$y_4$] {} (a2);
	\end{tikzpicture}
	
{\flushleft we see} that for example the loop $\vec z = (\xi_1, \xi_2, 1_{a_2}, \xi_3, \xi_4, \xi_5)$ is also a generator for $\vec x$ and $\vec y$, where just an identity has
	been added: $z_3 = 1_{a_2}$. This becomes a problem because now $\dom(z_3) = \dom(z_4)$ - a formula which does not hold for the generators $\vec \xi$. To avoid
	this, the axiom-scheme (iv) allows to reduce the length of $\vec z$ accordingly. Still, there are equivalent ways to axiomatize this property. An alternative to (iii) and (iv) can be obtained by observing that the
	generator $\vec z$ must have \emph{precisely} as many domains (arrows) as there are \emph{different} domains in the list $(\vec x, \vec y)$. For example, let us
	denote by $k'-\mbox{domeq}(\vec z)$ the assertion that $k'$ domains of arrows with different indexes in the list $\vec z = (z_1,\dots,z_k)$ are equal. The precise formula for
	this statement is 
	\[
		\bigvee_{\Lambda_{k'} \subset \{1,\dots,k\}} \Big( \bigwedge_{i \in \Lambda_{k'}}\Big( \bigvee_{j \in (\{1,\dots,k \} \setminus i)} \dom (z_i) = \dom
		(z_j)  \Big)\Big),
	\]
	where $\Lambda_{k'}$ is any subset of cardinality $k'$. Then, axioms (iii) and (iv) could be replaced by the following axiom scheme: for all $m,n \in \N$ and for all $k' \in \{1,\dots,m+n \}$
	\[ 
		\tilde \Phi_n (\vec x) \wedge \tilde \Phi_m (\vec y) \wedge k'\mbox{-domeq}(\vec x, \vec y) \vdash_{\vec x, \vec y} \exists \vec z
		((m+n-k')\mbox{-Gen}(\vec x, \vec y, \vec z)).
	\]
	Indeed, choosing $k'$ maximal in a given model does make the generator $\vec z$ minimal: every domain from $(\vec x, \vec y)$ must occur at least once in $\vec z$ by the
	requirement of a generator, and at the same time at most once by the maximal choice of $k'$.
\end{remark}

\begin{proof}
	As already remarked above, in order to prove that the theory $\T_{E}$ is classified by the epicyclic topos, we shall first verify that all its axioms are satisfied in
	every $X_{n} \in \tilde \Lambda$, and subsequently show that their validity in any $\G_T$-model entails the validity of sequents (i)-(v) of Theorem 6.32 in \cite{OCPT}.
	
	By the way, take note that ``validity'' here means validity in any model in an \emph{arbitrary} topos $\mathcal E$, and not just $\Set$. One could formulate the entire proof in the proof system for geometric logic by only manipulating sequents, but for the sake of readability and understanding, we will conduct most of the proof semantically in the standard Kripke-Joyal semantics for toposes. Due to completeness this is equivalent. So for example if we sometimes abusively speak of an ``object $a$'', with regard to a model $M$ in $\T_E\mbox{-mod}({\cal E})$, what we really mean is a generalized element of $M(G_0)$, and analogously for arrows or more general terms and formulae.

	By definition, the underlying sets of the $X_{n}$ are non-empty sets. Hence axiom (i) is always verified. Also, as already observed in Remarks \ref{rem:1}, given any two objects, successive applications of 1 will eventually yield positive arrows from one to the other whose composites are non-trivial loops, due to the transitivity of the action and the fact that $\Z$ is freely generated by 1. This shows the validity of axiom (ii) in every $X_{n}$.

	To show the validity of axiom (iii) in every $X_{n}$, suppose that two loops $\vec x$ and $\vec y$ in $X_{n}$ are given as in the premises of axiom (iii). Take $\vec z$ to be the unique loop such that $l(\vec z) = n.\dom(x_{1})$. In light of Remark \ref{rem:1} (c), all the positive arrows in $X_{n}$ are generated by $\vec{z}$; in particular $\vec x$ and $\vec y$ are generated by $\vec z$, as desired.

	The validity of axiom (iv) in $X_{n}$ can be shown as follows.  Consider the minimal $n$-loop $\vec u$ in $X_{n'}$ starting at the object $\dom (z_{1})$; then, by Remark \ref{rem:1} (c), all the domains of the arrows $z_{i}$ are equal to one of the domains $\dom(u_{j})$. Now, take $\vec v$ to be the loop obtained from $\vec u$ by successively removing any object $\dom(u_{j})$ which is not of the form $\dom(z_{i})$ for some $i$, replacing at each step the arrows $u_{j-1}$ and $u_{j}$ by their composite $u_{j}\comp u_{j-1}$. Clearly, this loop is still generating for $\vec z$ (cf. Remark \ref{rem:1} (c)) and its length is $\leq k-1$; if it is strictly smaller than $k-1$ insert identities in this loop at arbitrary places to as to increase its size to exactly $k-1$. The result will be a $(k-1)$-loop $\vec w$ satisfying the right-hand-side of axiom (iv).    
	
	Now all that remains to be shown is that the axiomatization of $\T_{E}$ ensures the validity of the five (schemes of) sequents of Theorem 6.32 \cite{OCPT}. We shall 
	undertake this task point by point, after having introduced a number of preparatory lemmas. 

\begin{lemma}\label{lem213}
		Given a loop $\tilde \Phi_{n} (\vec x)$ and a generator $k\mbox{-Gen}(\vec x, \vec z)$ with $x_{i}=u_{i}(\vec z)$ for all $i$ in a given ${\mathbb T}_{E}$-model,
		\[
			X_{k} \models \tilde \Phi_{n}(u_{1}(\vec \xi), \dots, u_{n}(\vec \xi)),
		\]
		where $\vec{\xi}$ are the generators for $X_{k}$.
		\label{lem:decomp_compatible_with_Xk}
	\end{lemma}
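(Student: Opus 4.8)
The plan is to verify the three conjuncts of $\tilde \Phi_{n}$ --- the loop condition $L_{n}$, the positivity $P$, and the non-triviality $T(l(-))$ --- directly inside the concrete, fully understood model $X_{k}$, rather than attempting to transport the hypothesis from the ambient ${\mathbb T}_{E}$-model $M$. The latter route is tempting via the canonical homomorphism $X_{k} \to M$ sending $\vec \xi \mapsto \vec z$ supplied by Lemma \ref{lem:Xn_fin_pres} (using that $\tilde \Phi_{k}(\vec z)$ is part of $k\hy{Gen}(\vec x, \vec z)$), but it pushes geometric formulae in the wrong direction: it would deduce $M \models \tilde \Phi_{n}(u(\vec z))$ from $X_{k} \models \tilde \Phi_{n}(u(\vec \xi))$, whereas we want precisely the converse. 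So I would work entirely with the explicit description of $X_{k}$ from Remarks \ref{rem:1}.

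First I would unwind the hypothesis $k\hy{Gen}(\vec x, \vec z)$: it fixes indices $1 = \alpha_{1} \le \dots \le \alpha_{n} \le k$ (arithmetic in $\Z / k\Z$, with $\alpha_{n+1} = \alpha_{1}$) and exponents $p_{1}, \dots, p_{n} \in \N$ for which $u_{i}(\vec z) = z_{\alpha_{i+1}-1} \comp \cdots \comp z_{\alpha_{i}} \comp l_{\alpha_{i}}(\vec z)^{p_{i}}$. Evaluating these terms at the canonical generators $\vec \xi = (1.x_{0}, \dots, 1.x_{k-1})$ and using $\xi_{j} : x_{j-1} \to x_{j}$ together with the fact that $l_{\alpha_{i}}(\vec \xi)$ is the minimal positive loop based at $x_{\alpha_{i}-1}$, I read off $u_{i}(\vec \xi) : x_{\alpha_{i}-1} \to x_{\alpha_{i+1}-1}$. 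Hence $\cod(u_{i}(\vec \xi)) = x_{\alpha_{i+1}-1} = \dom(u_{i+1}(\vec \xi))$ for every $i$, the cyclic convention $\alpha_{n+1} = \alpha_{1}$ closing the loop at $x_{0}$; this is exactly $L_{n}(u_{1}(\vec \xi), \dots, u_{n}(\vec \xi))$. Positivity is then immediate, since each $u_{i}(\vec \xi)$ is a composite of the positive generators $\xi_{j}$ and of powers of positive primitive loops, so axioms (iii) and (iv) of Definition \ref{def:ordered_groupoids} yield $P(u_{i}(\vec \xi))$.

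The only substantive conjunct is $T(l(u_{1}(\vec \xi), \dots, u_{n}(\vec \xi)))$, and here I would switch to the winding-number description of the endomorphisms of $x_{0}$ in $X_{k}$ afforded by Remarks \ref{rem:1}: each is of the form $m.x_{0}$ for a unique $m \in \Z$, the primitive loop having winding number $k$. The $\xi$-part of $u_{i}(\vec \xi)$ contributes $\alpha_{i+1} - \alpha_{i}$ and the factor $l_{\alpha_{i}}(\vec \xi)^{p_{i}}$ contributes $p_{i}k$, so the composite $l(u(\vec \xi)) = u_{n}(\vec \xi) \comp \cdots \comp u_{1}(\vec \xi)$ has winding number $\sum_{i=1}^{n}(\alpha_{i+1} - \alpha_{i}) + k \sum_{i=1}^{n} p_{i}$. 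Reading $\alpha_{n+1}$ as $\alpha_{1} + k$, the first sum telescopes to $k$, giving total winding $k(1 + \sum_{i} p_{i}) \ge k > 0$. Thus $l(u(\vec \xi))$ is a non-identity endomorphism of $x_{0}$, so $T$ holds, and the three conjuncts together give $X_{k} \models \tilde \Phi_{n}(u_{1}(\vec \xi), \dots, u_{n}(\vec \xi))$.

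The hard part will be purely the index bookkeeping: correctly computing the domains and codomains of the $u_{i}(\vec \xi)$ and, above all, respecting the cyclic wrap-around convention (treating $\alpha_{n+1}$ as $\alpha_{1} + k$ rather than literally $\alpha_{1}$) so that the telescoping sum contributes one full turn $k$ rather than $0$. Once this is settled, non-triviality in $X_{k}$ is automatic, as the winding number is bounded below by $k$ regardless of the $p_{i}$; it is worth noting in passing that this renders the conclusion insensitive to the hypothesis $T(l(\vec x))$ carried by $\tilde \Phi_{n}(\vec x)$ in $M$.
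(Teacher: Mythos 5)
Your proposal is correct and takes essentially the same approach as the paper's proof: both verify the three conjuncts of $\tilde \Phi_{n}$ directly in $X_{k}$, obtaining positivity from closure of $P$ under composition, the loop condition from the successiveness of the generation, and non-triviality because the composite $l(u_{1}(\vec \xi),\dots,u_{n}(\vec \xi))$ is a non-trivial positive power of the primitive loop $l(\vec \xi)$. Your explicit winding-number computation, telescoping to $k(1+\sum_{i} p_{i})\ge k$, is simply a more detailed rendering of the paper's observation that the term in $\vec z$ defining the composite is a non-trivial positive power of $l(\vec z)$.
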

	
	\begin{proof}
		All that this lemma says is that when applying the decomposition terms $u_{i}(\vec z)$ to the generator $\vec \xi$ in $X_{k}$, one still obtains a positive
		non-trivial loop. 

		Since the terms $u_{i}$ are just compositions, the positivity is clear. The fact that the compositions are \emph{successive} ensures that $(u_{1}(\vec
		\xi),\dots, u_{n}(\vec \xi))$ is a loop. Indeed, the terms $u_{i}(\vec z)$ are well defined for any loop $L_{k}(\vec z)$, because if $u_{i}(\vec z) = z_{j} \comp \cdots $ ends with
		$z_{j}$, then $u_{i+1}(\vec z) = \cdots \comp z_{j+1}$ starts with $z_{j+1}$ by the form of the $u_{i}$. 

		Non-triviality of $l(u_{1}(\vec \xi),\dots,u_{n}(\vec \xi))$ easily follows from the fact that, the exponents $p_{i}$ in the definition of $k\mbox{-Gen}$ being positive or zero, the successive generation of $\vec{x}$ from $\vec{z}$ ensures that the term in $\vec{z}$ defining the composite $l(\vec{x})=u_{n}(\vec z) \comp \cdots \comp u_{1}(\vec z)$ is a non-trivial positive power of $l(\vec{z})$.
		\renewcommand{\qedsymbol}{$\Box$ (Lemma)}
	\end{proof}

	\begin{lemma}
		Given any two loops $\tilde \Phi_{n} (\vec x)$ and $\tilde \Phi_{m}(\vec y)$ in a model of $\T_{E}$, the axioms for $\T_{E}$ ensure the existence of a \emph{minimal} generator $k
		\mbox{-Gen}(\vec x, \vec y, \vec z)$ in which every domain occurs only once, i.e. $\dom (z_{i}) = \dom (z_{j})$ iff $i = j$.
		\label{lem:minimal_k-generator}
	\end{lemma}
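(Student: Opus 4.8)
The plan is to obtain the minimal generator by starting from the generator supplied by axiom (iii) and then repeatedly invoking axiom (iv) to eliminate coincidences among the domains, each application of (iv) strictly shortening the generating loop. Concretely, given the two loops $\tilde\Phi_n(\vec x)$ and $\tilde\Phi_m(\vec y)$, axiom (iii) of Theorem \ref{thm:axiomatization} produces a loop $\vec z$ of some length $k \le n+m$ with $k\mbox{-Gen}(\vec x, \vec y, \vec z)$. If all the $\dom(z_i)$ are already pairwise distinct, this $\vec z$ is the desired generator. Otherwise $\dom(z_i) = \dom(z_j)$ for some $i \ne j$, and axiom (iv) yields a loop $\vec w$ of length $k-1$ with $\tilde\Phi_{k-1}(\vec w)$ and $(k-1)\mbox{-Gen}(\vec z, \vec w)$; one then replaces $\vec z$ by $\vec w$ and repeats.

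For this iteration to make sense I first need the \emph{transitivity} of the generation relation: if $\vec w$ generates $\vec z$ and $\vec z$ generates the pair $(\vec x, \vec y)$, then $\vec w$ generates $(\vec x, \vec y)$. This is where I would do the only real computation. Each $x_i$ is, by $k\mbox{-Gen}(\vec x, \vec z)$, a successive composite of the $z_j$'s together with a nonnegative power of a primitive loop, and each $z_j$ is in turn such a composite of the $w_l$'s; substituting the latter into the former and using exactly the bookkeeping of Lemma \ref{lem:decomp_compatible_with_Xk} --- successive composites of successive composites are again successive, and powers of primitive loops are sent to powers of primitive loops --- exhibits each $x_i$ as a successive composite of the $w_l$'s of the required form. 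The base-point condition propagates since $\dom(x_1) = \dom(z_1) = \dom(w_1)$, and the same argument applies to $\vec y$ (up to the cyclic permutation $\tau$ already built into the definition of $k\mbox{-Gen}(\vec x, \vec y, \vec z)$). Hence $(k-1)\mbox{-Gen}(\vec x, \vec y, \vec w)$ holds, so each reduction step preserves the property of being a generator of $(\vec x, \vec y)$.

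Termination is then immediate from the fact that the length is a natural number bounded above by $n+m$ and strictly decreasing under (iv): after finitely many steps one reaches a loop $\vec z$ to which axiom (iv) can no longer be applied, i.e. one in which $\dom(z_i) = \dom(z_j)$ forces $i=j$, which is precisely the minimal generator sought. The step I expect to be the main obstacle is not this algebra but the need to run the whole argument in an \emph{arbitrary} topos rather than in $\Set$: since equality of objects is not decidable there, one cannot simply test ``does some domain repeat?'' and branch. I would therefore phrase the reduction as a (strong) induction on the length $k$ carried out in the Kripke-Joyal semantics, passing at each stage to a cover on which axiom (iii) pins down a concrete generator and on which the relevant equalities $\dom(z_i)=\dom(z_j)$ --- detected by equalizers --- either hold, so that axiom (iv) together with the inductive hypothesis applies to the shorter generator, or else witness that the generator is already minimal. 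Keeping this covering bookkeeping geometric, so that the length bound $k \le n+m$ makes the induction well-founded, is the delicate point of the proof.
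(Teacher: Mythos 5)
Your proposal takes essentially the same route as the paper, whose entire proof reads: the existence of a $k$-generator is ensured by axiom (iii), and axiom (iv) allows to inductively reduce its size to arrive at a generator for $\vec z$, and hence for $\vec x$ and $\vec y$, of minimal size. The transitivity-of-generation computation (via the bookkeeping of Lemma \ref{lem:decomp_compatible_with_Xk}) and the termination-by-length argument that you spell out are exactly what the paper compresses into the words ``inductively'' and ``hence'', so your write-up is a correct, more detailed rendering of the same argument.
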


	\begin{proof}
		The existence of a $k$-generator is ensured by (iii).  Axiom (iv) allows to inductively reduce its size to to arrive at a generator for $\vec{z}$, and hence for $\vec{x}$ and $\vec{y}$, of minimal size.
		\renewcommand{\qedsymbol}{$\Box$ (Lemma)}
	\end{proof}

	\begin{lemma}
		Given a loop $\tilde \Phi_{n} (\vec x)$ and a minimal generator $k\mbox{-Gen}(\vec x, \vec z)$ as in the previous Lemma, then the decompositions
		\[
			x_{i} = z_{j+s}\comp  \cdots z_{j} \comp l_{j}(\vec z)^{p}
		\]
		are \emph{unique}, i.e. $1 \le j,s \le k$ and $p$ are \emph{uniquely} determined by $x_{i}$. More generally, every arrow $f$ obtained by successive
		applications from $\vec z$
		\[
			f = z_{j' + s'}\comp \cdots z_{j'} \comp  l_{j'}(\vec z)^{p'}
		\]
		determines a \emph{unique} set of parameters $1 \le j', s' \le k$ and $p' \in \Z$.
		\label{lem:unique_k-decomposition}
	\end{lemma}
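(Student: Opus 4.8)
The plan is to pin down the three parameters \emph{one at a time}, in the order $j$, then $s$, then $p$, reading off the first two from the domain and codomain of $f$ and the last from a cancellation argument combined with the non-triviality axioms. Since the statement is geometric and we argue in the Kripke--Joyal semantics, the indices $j,s,p$ are \emph{external} data labelling the disjuncts of the presenting formulae, so ``uniqueness'' means precisely that two decompositions of one and the same (generalized) arrow $f$ are forced to carry identical parameters.

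First I would use \emph{minimality}. By Lemma~\ref{lem:minimal_k-generator} the generator $\vec z$ may be taken so that $\dom(z_i)=\dom(z_j)$ iff $i=j$; the pairwise distinctness of the $k$ domains is the crucial structural input. In the displayed form $f=z_{j+s}\comp\cdots\comp z_j\comp l_j(\vec z)^{p}$ the factor $l_j(\vec z)^{p}$ is an endomorphism of $\dom(z_j)$, so $\dom(f)=\dom(z_j)$ independently of $s$ and $p$. Hence a second decomposition $f=z_{j'+s'}\comp\cdots\comp z_{j'}\comp l_{j'}(\vec z)^{p'}$ gives $\dom(z_j)=\dom(z_{j'})$, and distinctness forces $j=j'$. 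Likewise $\cod(f)=\cod(z_{j+s})$ is the domain of the next generator $z_{(j+s)\oplus 1}$, so equating codomains and using $j=j'$ yields $s\equiv s'\pmod k$. Because the admissible range for $s$ is a complete residue system modulo $k$, this already forces $s=s'$; here one must take care that the range is exactly one loop's worth, so that the path part $z_{j+s}\comp\cdots\comp z_j$ never secretly contains a full copy of $l_j(\vec z)$ which could be traded against the exponent.

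The genuine content, and the step I expect to be the main obstacle, is the uniqueness of the exponent $p$. With $j=j'$ and $s=s'$ the two decompositions share the path part $w:=z_{j+s}\comp\cdots\comp z_j$, and composing on the left with $w^{-1}$ (available since we are in a groupoid) reduces the equation to $l_j(\vec z)^{p}=l_j(\vec z)^{p'}$, i.e.\ $l_j(\vec z)^{p-p'}=1_{\dom(z_j)}$. If $p\neq p'$ then $p-p'\in\Z\setminus\{0\}$, and since $\vec z$ is a positive non-trivial loop we have $T(l_j(\vec z))$ (cf.\ Notations~\ref{not:chapter1}); axiom (ix) of Definition~\ref{def:ordered_groupoids} then gives $T(l_j(\vec z)^{p-p'})$, that is $T(1_{\dom(z_j)})$, contradicting axiom (viii). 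Hence $p=p'$. The subtle point is exactly that a nontrivial loop could \emph{a priori} have finite order, collapsing distinct powers; it is the interplay of axioms (viii) and (ix)---the whole reason for introducing the predicate $T$---that forces $l_j(\vec z)$ to have infinite order and thereby recovers $p$ uniquely.

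The same three steps prove the more general assertion verbatim: for an arbitrary arrow $f=z_{j'+s'}\comp\cdots\comp z_{j'}\comp l_{j'}(\vec z)^{p'}$ obtained by successive applications from $\vec z$, the domain determines $j'$, the codomain determines $s'$, and the non-triviality of \emph{every} nonzero power of $l_{j'}(\vec z)$ recovers $p'\in\Z$. An alternative packaging, which I would at least mention, is to invoke the presenting homomorphism $X_k\to M$ sending $\vec\xi\mapsto\vec z$ (it exists by Lemma~\ref{lem:Xn_fin_pres}, since $\vec z$ satisfies $\tilde\Phi_k$ in $M$) and to transport the unique factorization of Remark~\ref{rem:1}(c): minimality makes this homomorphism injective on objects and the $T$-axioms make it injective on loops, so the uniqueness in $M$ descends from the already-known uniqueness in $X_k$.
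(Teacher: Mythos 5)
Your proof is correct and takes essentially the same route as the paper's: minimality of $\vec z$ (each domain occurring exactly once) pins down $j$ and $s$ via the domain and codomain, and cancellation reduces the exponent question to $l_{j}(\vec z)^{p-p'} = 1_{\dom(z_{j})}$, which is excluded for $p \neq p'$ by the chain $T(l(\vec z)) \Rightarrow T(l_{j}(\vec z)) \Rightarrow T(l_{j}(\vec z)^{q})$ for $q \neq 0$ together with the axiom forbidding $T$ on identities. If anything, your citation of axiom (viii) for the final contradiction is the accurate one; the paper's proof refers to axiom (v) at that step, which appears to be a slip.
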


	\begin{proof}
		It is clear that $j$ and $j+s$ is uniquely determined by the domain and codomain of $x_{i}$, since each of them occurs precisely once in $\dom (\vec z)$ by our 
		assumption on $\vec{z}$. Further, suppose that $z_{j+s}\comp \cdots z_{j} \comp l_{j}(\vec{z})^{p} = z_{j+s} \cdots z_{j} \comp l_{j}(\vec{z})^{p'}$. Then, due to existence of inverses, $1 =
		l_{j}(\vec{z})^{(p-p')}$. But 
		\begin{align*}
			& T(l(\vec z)) & \mbox{by definition of $k$-Gen} \\
			\Rightarrow \textrm{ } & T(l_{j}(\vec z)) & \mbox{by axiom (x) in Definition \ref{def:ordered_groupoids} of }\G_{T} \\
			\Rightarrow \textrm{ } & T(l_{j} ^{q}(\vec z)) & \mbox{for all } q \neq 0 \mbox{ by axiom (ix) in Definition \ref{def:ordered_groupoids} of } \G_{T} \\
			\Rightarrow \textrm{ } & (p-p') = 0 & \mbox{by axiom (v) in Definition \ref{def:ordered_groupoids} of }\G_{T}
		\end{align*}
		Hence also $p$ is uniquely determined by $x_{i}$.
		\renewcommand{\qedsymbol}{$\Box$ (Lemma)}
	\end{proof}
	
				\begin{lemma}\label{lemcompterm}
				Suppose that $\vec z$ is a \emph{minimal generator} in the sense of Lemma \ref{lem:minimal_k-generator}, and let $t'(\vec z) = z_{i+r} \comp \cdots
				\comp z_{i} \comp l_{i}(\vec z)^{p}$ and $t''(\vec z) = z_{j+s} \comp \cdots \comp z_{j} \comp l_{j}(\vec z)^{q}$ be two terms successive in $\vec z$. Then
				\begin{enumerate}[(a)]
					\item $t'(\vec z)^{-1}$ is successive in $\vec z$, and
					\item $t''(\vec z) \comp t'(\vec z)$ is successive in $\vec z$, if defined.
				\end{enumerate}
			\end{lemma}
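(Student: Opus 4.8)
The plan is to read ``successive in $\vec z$'' as the structural normal form $z_{j+s}\comp\cdots\comp z_j\comp l_j(\vec z)^p$ --- a forward consecutive run of generators based at $\dom(z_j)$, post-composed with an integer power $p\in\Z$ of the primitive loop $l_j(\vec z)$ at that base (integer exponents being exactly what Lemma \ref{lem:unique_k-decomposition} allows) --- and to check that this shape survives inversion and composition. Everything rests on a single \emph{base-change identity} for primitive loops: if $w=z_{a+t}\comp\cdots\comp z_a$ is a forward run from $\dom(z_a)$ to $\dom(z_{a+t+1})$, then $l_{a+t+1}(\vec z)=w\comp l_a(\vec z)\comp w^{-1}$, equivalently $w\comp l_a(\vec z)^n=l_{a+t+1}(\vec z)^n\comp w$ for every $n\in\Z$. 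This follows from the groupoid axioms together with the two factorizations $l_a(\vec z)=w'\comp w$ and $l_{a+t+1}(\vec z)=w\comp w'$ of the full loop, where $w'$ is the complementary run; minimality of $\vec z$ is what guarantees that $w$ and $w'$ together traverse each generator exactly once, so that these factorizations are genuine.

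For part (a) I would write $t'(\vec z)=w\comp l_i(\vec z)^p$ with $w=z_{i+r}\comp\cdots\comp z_i$, invert to get $t'(\vec z)^{-1}=l_i(\vec z)^{-p}\comp w^{-1}$, and replace $w^{-1}$ using $w\comp w'=l_{i+r+1}(\vec z)$ (so that $w^{-1}=w'\comp l_{i+r+1}(\vec z)^{-1}$, with $w'=z_{i-1}\comp\cdots\comp z_{i+r+1}$). The base-change identity for the complementary run $w'$ then slides $l_i(\vec z)^{-p}$ to the right of $w'$, collapsing everything into $t'(\vec z)^{-1}=w'\comp l_{i+r+1}(\vec z)^{-(p+1)}$ --- precisely the successive normal form based at $\dom(z_{i+r+1})$, with run $w'$ and loop exponent $-(p+1)$.

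For part (b) I would first note that the composite being defined forces $\cod(t')=\dom(t'')$, i.e. $\dom(z_{i+r+1})=\dom(z_j)$; here minimality is essential, since it upgrades this equality of objects to the index congruence $j\equiv i+r+1 \pmod k$. Writing $v=z_{j+s}\comp\cdots\comp z_j$ for the run of $t''$ and using $l_j(\vec z)^q=l_{i+r+1}(\vec z)^q=w\comp l_i(\vec z)^q\comp w^{-1}$, the $w^{-1}$ cancels the $w$ coming from $t'$ and one is left with $t''(\vec z)\comp t'(\vec z)=(v\comp w)\comp l_i(\vec z)^{p+q}$. Since $v$ begins exactly where $w$ ends, $v\comp w=z_{i+r+s+1}\comp\cdots\comp z_i$ is again a single forward run; if its length $r+s+2$ exceeds $k$ I would peel off complete laps using $z_{i+k-1}\comp\cdots\comp z_i=l_i(\vec z)$ and absorb them into the exponent, returning to a run of length $<k$. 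The outcome is a successive term based at $\dom(z_i)$.

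The main obstacle is purely bookkeeping: carrying the base index, the run length and the loop exponent through the modular arithmetic, and keeping the two conjugate presentations of the primitive loop straight. Conceptually the statement is transparent --- successive terms are the arrows of definite $\Z$-winding, which plainly form a subgroupoid --- but the point is to establish this \emph{uniformly} in every $\T_E$-model from the groupoid axioms and the base-change identity, rather than in the concrete $X_k$. Minimality of $\vec z$ is precisely what makes the normal form rigid: it supplies distinct domains, hence both the uniqueness of decompositions and the passage from ``defined'' to an index congruence.
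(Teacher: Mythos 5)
Your proof is correct and follows essentially the same route as the paper's: part (a) lands on exactly the paper's normal form $z_{i-1}\comp\cdots\comp z_{i+r+1}\comp l_{i+r+1}(\vec z)^{-(p+1)}$, and part (b) uses minimality to turn definedness of the composite into the congruence $j\equiv i+r+1 \pmod{k}$ and then slides the loop power through the run to obtain $z_{j+s}\comp\cdots\comp z_{i}\comp l_{i}(\vec z)^{p+q}$. Your ``base-change identity'' is simply the paper's inline manipulation packaged as a conjugation lemma, and your peeling-off of complete laps when the combined run exceeds length $k$ tidies a wrap-around detail the paper leaves implicit.
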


			\begin{proof}
				\begin{enumerate}[(a)]
					\item \begin{align*} 
							t'(\vec z) ^{-1} &= l_{i}(\vec z)^{-p} \comp z_{i}^{-1} \comp \cdots \comp z_{i+r}^{-1} \\
							&= (z_{i}^{-1} \comp \cdots \comp z_{i-1}^{-1})^{p} \comp z_{i}^{-1}\comp \cdots \comp z_{i+r}^{-1} \\
							&= z_{i}^{-1}\comp \cdots \comp z_{i+r}^{-1} \comp (z_{i+r+1}^{-1}\comp \cdots \comp z_{i+r}^{-1})^{p} \\
							&= z_{i-1} \comp \cdots \comp z_{i+r+1} \comp (z_{i+r+1}^{-1} \comp \cdots \comp z_{i+r}^{-1})^{p+1} \\
							&= z_{i-1} \comp \cdots \comp z_{i+r+1} \comp (l_{i+r+1}(\vec z))^{-p-1}
						\end{align*}
					\item Notice that the composition is defined if and only if $j \equiv i+r+1 \mbox{ (mod  $k$)}$.
						\begin{align*}
							t''(\vec z) \comp t'(\vec z) &= z_{j+s}\comp \cdots \comp z_{j} \comp l_{j}(\vec{z})^{q} \comp z_{i+r} \comp \cdots \comp z_{i} \comp
							l_{i}(\vec{z})^{p} \\
							&= z_{j+s}\comp \cdots \comp z_{i} \comp l_{i}(\vec{z})^{p+q}
						\end{align*}
				\end{enumerate}
				\renewcommand{\qedsymbol}{$\Box$ (Lemma)}
			\end{proof}

Let us now come back to the sequents of Theorem 6.32 one by one:

	\begin{enumerate}[(i)]
		\item By axiom (i) there exists an object $a$, and by axiom (ii) there exists a non-trivial loop $f$ at $a$. In particular, $\tilde \Phi_{1} (f)$, which ensures the validity of the sequent.
		\item Given two loops $\tilde \Phi_{n} (\vec x) \wedge \tilde \Phi_{m}(\vec y)$, axiom (iii) ensures the existence of a loop $\tilde
			\Phi_{k} (\vec z)$ generating the former two. 
Lemma \ref{lem213} thus allows us to conclude our thesis.	
	
	\item Given a loop $\tilde \Phi_{m}(\vec y)$, for every term $t(\vec y)$ of sort $G^{1}$ one has $\dom (t(\vec y)) = \dom (y_{j})$ for some $j$,
			and similarly for $\cod$. Indeed, $t(\vec y)$ is at most a composition of identities, $y_{j}$'s and their inverses.

			So assume that we are given terms $t_{i}(\vec y) = s_{i}(\vec y)$ $(1 \le i \le n)$ satisfying $\tilde \Phi_{n}(t_{1}(\vec y),\dots,t_{n}(\vec y))$ and
			$\tilde \Phi_{n}(s_{1}(\vec y),\dots,s_{n}(\vec y))$ respectively. Let $\vec z$ be a minimal generator $k\mbox{-Gen}(\vec t (\vec y), \vec
			y, \vec z)$ in the sense of Lemma \ref{lem:minimal_k-generator}. By Lemma \ref{lem:decomp_compatible_with_Xk} the decomposition terms of the $y_{j} = u_{j}(\vec z)$ are compatible with the $\vec \xi$ in $X_{k}$, that is
$X_{k} \models \tilde \Phi_{m}(u_{1}(\vec \xi), \dots, u_{m}(\vec \xi))$. 
			
			By Lemma \ref{lemcompterm}, the terms $t_{i}(\vec y)$ and $s_{i}(\vec y)$ are successive in $\vec{z}$ whence by Lemma \ref{lem:unique_k-decomposition} their decompositions in
			terms of $\vec z$ are \emph{unique}. This implies that $t_{i}(u_{1}(\vec z),\dots,u_{m}(\vec z)) =
			s_{i}(u_{1}(\vec z),\dots,u_{m}(\vec z))$ and hence that $t_{i}(u_{1}(\vec \xi),\dots,u_{m}(\vec \xi))=
			s_{i}(u_{1}(\vec \xi),\dots,u_{m}(\vec \xi))$, which is what we needed to ensure for the validity of sequent (iii).
		\item Suppose that we are given an arrow $f: a \rightarrow b$.  By totality of the order, either $P(f)$ or $P(f^{-1})$. Without loss of generality we assume the former. Then axiom (ii) ensures the existence of a positive arrow $g:b \to a$ such that $T(g\comp f)$. In particular, $\tilde \Phi_{2}(f,g)$. This implies the validity of sequent (iv). 
			
		\item Suppose that we are given two loops $\tilde \Phi_{n}(\vec x)$ and $\tilde \Phi_{m}(\vec y)$ and two terms $t(\vec x) = s(\vec y)$. Let $z$ be a minimal generator $k\mbox{-Gen}(\vec x, \vec y, \vec z)$ in the sense of Lemma \ref{lem:minimal_k-generator}.
			By Lemma \ref{lem:decomp_compatible_with_Xk}, the decompositions of $x_{i} = p_{i}(\vec z)$ and $y_{j} = q_{j}(\vec z)$ are compatible with
			$\vec{\xi}$ in $X_{k}$, that is
$X_{k} \models \tilde \Phi_{n}(p_{1}(\vec \xi), \dots, p_{n}(\vec \xi))$ and $X_{k} \models \tilde \Phi_{m}(q_{1}(\vec \xi), \dots, q_{m}(\vec \xi))$. 			
			 So all that is left to show is the validity of 
\[
t(p_{1}(\vec \xi),\dots,p_{n}(\vec \xi)) = s(q_{1}(\vec \xi),\dots,q_{m}(\vec \xi))
\] 
in $X_{k}$.

			Suppose first that $t(\vec x)$ and $s(\vec y)$ are of sort $G_{1}$. These terms are obtained by \emph{successive} applications from $\vec z$ by Lemma \ref{lemcompterm}, whence Lemma \ref{lem:unique_k-decomposition} ensures the equality between them, as desired.

			If $t$ and $s$ are of sort $G_{0}$ then the result follows immediately from the previous case by using the identification between objects and identical arrows on them.
			
	\end{enumerate}
		
\end{proof}

\section{The cyclic theory}

Our aim in this section is to construct a theory $\T_{C}$ classified by Connes' \emph{cyclic topos}. Recall that this topos is defined as the category $[\Lambda, \Set]$ of set-valued functors on the cyclic category $\Lambda$ originally defined in \cite{CC1}. 

Several results about the cyclic topos are already known. In an unpublished note (\cite{Moerdijk}) I. Moerdijk suggests a theory of ``abstract circles'' classified by the cyclic topos; for a complete, fully constructive, proof the reader may refer to section 8.1.1 of \cite{OCPT}. In \cite{CC3} an equivalence of categories between a category $\mathfrak{Arc}$ of \emph{archimedian sets} and that of $\Set$-valued abstract circles is constructed and used to describe the points of the cyclic topos in terms of archimedian sets.

Amongst these different characterizations of the cyclic category, we shall use Connes-Consani's description of the  epicyclic category in terms of oriented groupoids (cf. Definition \ref{def:epicyclic}) to describe $\Lambda$ as well in these terms. As we shall see shortly however, the existence of distinguished cycles in $\Lambda$ will simplify things dramatically. 

\begin{definition}
\begin{enumerate}[(a)]
\item	The language $\mathcal L_{\G_C}$ of \emph{cyclically oriented groupoids}, or \emph{oriented groupoids with cycles}, is obtained from the language of oriented groupo-ids $\mathcal L_\G$ (cf. Definition \ref{def:ordered_groupoids}) by adding a function symbol $C: G_{0} \rightarrow G_{1}$, whose intended interpretation is the assignment to an object of the generator of the cyclic group of endomorphisms on it. Again of course, the orientation on arrows will be expressed through the unary predicate $P$. 

\item	The \emph{theory of cyclically oriented groupoids} $\bar{\G}_C$ is obtained by adding to the geometric theory $\bar \G$ of Definition \ref{def:ordered_groupoids} the
	following geometric sequents:

	\begin{enumerate}[(i)]
			\setcounter{enumi}{6}
		\item $\top \vdash_{a^{G_{0}}} \dom (C_{a}) = \cod (C_{a}) = a$
		\item $\top \vdash_{a^{G_0}} P(C_a)$
		\item $C_{a} = 1_{a} \vdash_{a^{G_{0}}} \bot$
		\item $g \comp f = C_{\dom( f)} \vdash_{f,g} f \comp g = C_{\dom (g)}$
	\end{enumerate}

	As in the epicyclic case, if axiom (vi) is omitted, one obtains the theory of \emph{partially} oriented groupoids with cycles, denoted by $\G_{C}$.
	\end{enumerate}
\end{definition}

Similarly to the epicyclic case, we have the following

\begin{lemma}
	The theory $\G_C$ of partially oriented groupoids with cycles is of presheaf type.
	\label{lem:cyclic_groupoids_of_presheaftype}
\end{lemma}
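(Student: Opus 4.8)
The plan is to reproduce, essentially verbatim, the argument used for Lemma~\ref{lem:Gofpresheaf} in the epicyclic case. The strategy rests on the two ingredients already invoked there: that every cartesian theory is of presheaf type, and that (by Theorem~6.28 of \cite{OCPT}) the class of presheaf-type theories is closed under adjoining geometric axioms of the form $\phi \vdash_{\vec x} \bot$. So the whole task reduces to sorting the axioms of $\G_C$ into ``cartesian'' and ``of the form $\phi \vdash_{\vec x} \bot$''.

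First I would inspect the axioms of $\G_C$ individually. Recall that $\G_C$ consists of the cartesian theory $\mathbb{C}$ of categories, the axioms (i)--(v) of $\bar{\G}$ (the disjunctive axiom (vi) being exactly the one suppressed in passing to the \emph{partial} theory), and the four new axioms (vii)--(x) governing the cycle operator $C\colon G_0 \to G_1$. Among the latter, axiom (vii) is a conjunction of equations, axiom (viii) asserts an atomic predicate, and axiom (x) is an implication between conjunctions of atomic formulae; together with (i)--(v) these are all Horn, hence cartesian. The composites occurring in (i), (ii), (iv) and (x) translate into atomic formulae through the ternary composition predicate of the signature, exactly as for the corresponding axioms of $\bar{\G}$, and the total function symbol for cycles poses no obstacle, since the terms $C_a$ are legitimate terms of cartesian logic. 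Thus the subtheory of $\G_C$ obtained by deleting axiom (ix) is cartesian, and therefore of presheaf type.

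The one axiom falling outside this class is (ix), namely $C_a = 1_a \vdash_{a^{G_0}} \bot$, which is not cartesian because of the $\bot$ on the right. But it is precisely of the shape $\phi \vdash_{\vec x} \bot$, with $\phi$ the cartesian formula $C_a = 1_a$; hence adjoining it back to the cartesian subtheory preserves the presheaf-type property by Theorem~6.28 of \cite{OCPT}, and the lemma follows. The only step deserving genuine care --- the one I would verify most scrupulously --- is the cartesianness of axiom (x): one has to confirm, once the implicit composability conditions are made explicit via the ternary composition predicate, that both of its sides are conjunctions of atomic formulae, so that no disjunction or genuine existential quantifier creeps in. Granting this, the argument is identical to that of the epicyclic case.
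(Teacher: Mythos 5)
Your proof is correct and follows essentially the same route as the paper, whose proof reads in full: apart from sequent (ix), the axioms of $\G_C$ are cartesian, so the claim follows from Theorem 6.28 of \cite{OCPT}. Your more detailed sorting of axioms (vii), (viii) and (x) into the cartesian fragment, with the composability conditions handled via provably-unique existentials over the ternary composition predicate, is exactly the verification the paper leaves implicit.
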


\begin{proof}
	Apart from sequent (ix), the axioms of $\G_C$ are cartesian. The thesis thus follows from Theorem 6.28 \cite{OCPT}.
\end{proof}

As in the spirit of Connes-Consani's description of the epicyclic category, one obtains the following natural characterization of the cyclic category $\Lambda$ in terms of the theory $\G_C$ of partially oriented groupoids with cycles.

Let $X_n$ denote the groupoid $\Z \rtimes \{0,\dots,n-1\}$, as in the previous section (see Example \ref{ex:Xn} and Remarks \ref{rem:1}). Its orientation is again
given by the natural condition `$P(m.x_i)$ if and only if $m \ge 0$'. $X_n$ can be canonically made into a model of $\G_C$ (or even $\bar \G_C$ of course), by interpreting the elementary
cycle $C_{x_i}$ as the minimal loop $n.x_i$.  This explains the name ``elementary cycle''. Now, every endomorphism $e:x_i \rightarrow x_i$ is of the form $e=C_{x_i}^k$ for a
unique $k$. More generally, every arrow $f: x_i \rightarrow x_j$ is of the form $f = (|j-i|).x_i \comp C_{x_i}^k$ for a unique $k$.

For the rest of this section, $X_n$ will denote the just described groupoid, considered as a model of the theory $\G_C$. 

\begin{definition}
	\begin{enumerate}[(a)]
		\item The \emph{cyclic category} $\Lambda$ is the full subcategory of the category $\G_C \mbox{-mod}(\Set)$ consisting of partially oriented groupoids with
			cycles of the form $X_n$ (for $n\geq 1$).
		\item The \emph{cyclic topos} is the category $[\Lambda,\Set]$ of set-valued functors on $\Lambda$.
	\end{enumerate}
\end{definition}

\begin{remark}
	The categories $\Lambda$ and $\tilde \Lambda$ have the same objects. Clearly, every $X_n$ as a model of $\G_T$ can be seen as a model of $\G_C$ and vice versa. The difference between these categories lies in their hom-sets. Whilst homomorphisms of $\G_T$ must preserve $T$, which means sending non-trivial loops to non-trivial loops, the homomorphisms of $\G_C$ must send elementary cycles to elementary cycles. Since by the above description of the elementary cycles in $X_n$ every non-trivial loop is a power of the elementary cycle, every $\G_C$-model homomorphism $X_n \rightarrow X_{n'}$ is also a $\G_T$-model homomorphism, i.e. $\Lambda \subset \tilde \Lambda$.	
\end{remark}

Before proceeding further, we introduce two convenient notational abbreviations, in addition to the ones already defined in \ref{not:chapter1}.

\begin{notations}
	\begin{enumerate}
		\item Let $\tilde \Psi_n (\vec x)$ denote the proposition asserting that $\vec x$ is an elementary cycle
			\[ 
				\mathbf{\tilde \Psi_n}(\vec{\mathbf x}): L_n (\vec x) \wedge P(\vec x) \wedge l(\vec x) = C_{\dom (x_1)}.  
			\]
			It is important to note that due to axiom (x) of $\G_C$, $l(\vec x) = C_{\dom (x_1)}$ entails the validity of $l_i(\vec x) = C_{\dom (x_i)}$ for all
			$i$. We will see shortly that the $\G_C$-models $X_n \in \Lambda$ are finitely presented by the formulae $\tilde \Psi_n$.
		\item Given an arrow $f: a \rightarrow b$, let $\mbox{PMin}(f)$ be the proposition asserting that $f$ is the \emph{minimal positive arrow} from $a$ to $b$, i.e.
			\[
				\mathbf{\mbox{\bf PMin}(f): } \textrm{ } P(f) \wedge (\exists g: b \rightarrow a)(P(g) \wedge g \comp f = C_a)
			\]
			In other words, $\mbox{PMin}(f)$ says that $f$ is an arrow ``contained'' in the elementary cycle $C_a$.
	\end{enumerate}
\end{notations}

Just as in the epicyclic case (see Lemma \ref{lem:Xn_fin_pres}), the set $\mathcal P$ of formulas which finitely present a $\G_C$-model 
$X_n \in \Lambda$ is of a very simple form:

\begin{lemma}
	The objects $X_n$ of $\Lambda$, viewed as models in $\G_C\mbox{-mod}(\Set)$, are \emph{finitely presented} by the formulae
	\[
		\big\{ (f_1,\dots,f_n). \tilde \Psi_n \big\}.
	\]
	\label{lem:Xn_cyclic_fin_pres}
\end{lemma}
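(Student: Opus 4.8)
The plan is to mirror the proof of Lemma~\ref{lem:Xn_fin_pres}, the epicyclic analogue, adapting it to the presence of the cycle operator $C$. By the presentation criterion recalled just before that lemma, it suffices to exhibit, for an arbitrary $\G_C$-model $G$, a bijection between $\G_C$-model homomorphisms $H\colon X_n \to G$ and the tuples $\vec f \in \llbracket (f_1,\dots,f_n).\tilde\Psi_n\rrbracket_G$, natural in $G$.

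For the forward direction, I would send a homomorphism $H$ to the string of images $(H(\xi_1),\dots,H(\xi_n))$ of the canonical generators $\xi_i = (1.x_{i-1})$ of $X_n$. Since $H$ preserves positivity $P$ and the loop relations $L_n$, and since in $X_n$ one has $l(\vec\xi) = n.x_0 = C_{x_0} = C_{\dom(\xi_1)}$, the fact that $H$ also preserves the cycle operator $C$ forces $(H(\xi_1),\dots,H(\xi_n))$ to satisfy all three conjuncts of $\tilde\Psi_n$.

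For the reverse direction, starting from a tuple $\vec f$ satisfying $\tilde\Psi_n$ in $G$, I would define $H$ on objects by $x_i \mapsto \dom(f_{i+1})$ (indices modulo $n$) and on the generators by $\xi_i \mapsto f_i$, and then extend to an arbitrary arrow $f\colon x_i \to x_j$ via its unique factorization $f = (|j-i|).x_i \comp C_{x_i}^k$ recorded in the paragraph preceding the definition of $\Lambda$; this uniqueness is exactly what makes $H$ well defined. The routine verifications that $H$ respects $\dom$, $\cod$, composition, identities, inversion and $P$ proceed as in the epicyclic case. The one genuinely new point, and the step I expect to be the main obstacle, is the preservation of the cycle operator, namely $H(C_{x_i}) = C_{\dom(f_i)}$. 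Because $C_{x_i}$ is interpreted in $X_n$ as the minimal loop $n.x_i = l_i(\vec\xi)$, this equality unwinds to the requirement $l_i(\vec f) = C_{\dom(f_i)}$ in $G$ for every $i$. I would obtain this from the third conjunct $l(\vec f)=C_{\dom(f_1)}$ of $\tilde\Psi_n$ together with axiom (x) of $\G_C$, which, as noted right after the definition of $\tilde\Psi_n$, propagates the identity $l(\vec f) = C_{\dom(f_1)}$ to $l_i(\vec f) = C_{\dom(f_i)}$ at every base point. This is precisely the feature that makes the cyclic case simpler than the epicyclic one: the distinguished cycle pins down all non-trivial endomorphisms as its powers, so no separate non-triviality bookkeeping is required.

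Finally I would check that the two assignments are mutually inverse, which is immediate since $H$ is completely determined by its values on the objects and on the generating arrows $\xi_i$, and that the construction is natural in $G$, which follows directly from its definition in terms of the generators. Together these give the claimed finite presentation of $X_n$ by $\{(f_1,\dots,f_n).\tilde\Psi_n\}$.
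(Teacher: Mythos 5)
Your proposal is correct and follows essentially the same route as the paper: map a homomorphism to the image of the generating loop $\vec\xi$, and conversely extend $\xi_i \mapsto f_i$ to all of $X_n$, with the preservation of $C$ secured exactly as the paper does, by propagating $l(\vec f)=C_{\dom(f_1)}$ to all base points via axiom (x) of $\G_C$. The only blemish is a harmless off-by-one in your identification $C_{x_i}=l_i(\vec\xi)$ (with the paper's convention $\xi_i\colon x_{i-1}\to x_i$ one has $C_{x_{i-1}}=l_i(\vec\xi)$), which is immaterial since the required identity is quantified over all $i$.
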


\begin{proof}
	Just as in Lemma \ref{lem:Xn_fin_pres}, one has to establish a natural bijection between the model homomorphisms $H:X_n \rightarrow G$ in $\G_C \mbox{-mod}(\Set)$ and the elements of the interpretation
	$\llbracket \vec x. \tilde \Psi_n \rrbracket_G$ of the formula $\tilde \Psi_n (\vec x)$ in the model $G$.

	Given such a homomorphism $H$, the image of the generating loop $\vec \xi$ of $X_n$ clearly belongs to the interpretation of the formula $\tilde \Psi_n (\vec x)$ in the model $G$, by the very definition of $\G_C$-model homomorphisms. 

	Conversely, suppose that $\vec f = (f_1, \dots ,f_n)$ is a loop in $G$ satisfying $\tilde \Psi_n$. We want to show that the assignment $H: \xi_i \mapsto f_i$
	defines a $\G_C$-model homomorphism. First, it is readily seen that this assignment extends to all of $X_n$ by making it compatible with composition $H(\xi_{i+1} \comp \xi_i)
	:= H(\xi_{i+1}) \comp H(\xi_i)$ and inverses, since every arrow in $X_n$ is obtained by repeatedly applying these operations to the generators $\vec \xi$. From $L_n(\vec f)$ it follows that $H$ is compatible with $\dom$ and $\cod$. Since composition preserves $P$ (axiom (iv)) and $P(\vec f)$, $H$ preserves $P$. Lastly, $H$ preserves $C$ because
	$l(\vec f) = C_{\dom (f_1)}$ and by axiom (x) this implies $l_i (\vec f) = C_{\dom (f_i)}$.

	By construction, these correspondence is inverse to each other and natural in $G$, as required.
\end{proof}

Similarly to the epicyclic case, Theorem 6.29 \cite{OCPT} ensures that the cyclic topos classifies the full theory $\T_C$ of all geometric sequents over the signature $\mathcal L_{\G_C}$ which are valid in every
$X_n \in \Lambda$, while Theorem 6.32 \cite{OCPT} provides a generic axiomatization for this theory. 

A more explicit axiomatization for $\T_C$ is provided by the following 

\begin{theorem}
	\label{thm:cyclic_axiomatization}
	The theory $\T_C$ classified by the cyclic topos can be axiomatized in the language $\mathcal L_{\G_C}$ by adding to the theory $\bar \G_C$ of oriented groupoids
	with cycles the following axioms:
\begin{enumerate}[(i)]
\item non-triviality for objects: 
	\[
		\top \vdash_{[]} (\exists a^{G_{0}})(a=a)
	\]
\item non-triviality for hom-sets :
	\[
		\top \vdash_{a, b} (\exists f: a \rightarrow b) (P(f))
	\]

\item factorization through cycles and minimal positive arrows:
	\[
		P(f)\vdash_{f:a \rightarrow b} \bigvee_{n\in {\mathbb N}}(\exists \alpha: a \rightarrow b)(\mbox{PMin}(\alpha) \wedge f=\alpha \comp (C_{a})^{n})
	\]
\end{enumerate}
	We call the theory $\mathbb{T}_{C}$ the `cyclic theory'.
\end{theorem}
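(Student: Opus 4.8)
The plan is to run the same two‑step argument used for the epicyclic theorem (Theorem~\ref{thm:axiomatization}). By Lemma~\ref{lem:cyclic_groupoids_of_presheaftype} the theory $\G_C$ is of presheaf type, with classifying topos $[\mbox{f.p.}\,\G_C\mbox{-mod}(\Set),\Set]$, and by Lemma~\ref{lem:Xn_cyclic_fin_pres} the objects $X_n\in\Lambda$ form a full subcategory of finitely presented (and finitely generated) models, presented by the formulae $\tilde\Psi_n$. Hence Theorem 6.29 of \cite{OCPT} identifies $\T_C$ with the theory of all geometric sequents valid in every $X_n$, and Theorem~\ref{extending} reduces the problem to two tasks: first, that axioms (i)--(iii) hold in every $X_n\in\Lambda$; second, that $\bar\G_C$ together with (i)--(iii) entails the five schemes (i)--(v) of Theorem~\ref{extending}, where the presenting set $\mathcal P$ is exactly $\{\tilde\Psi_n\mid n\geq 1\}$. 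I would conduct everything semantically in the Kripke--Joyal semantics, as in the epicyclic proof.

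For the soundness step I would argue directly in $X_n$. Axiom (i) holds since $X_n$ is non-empty; axiom (ii) holds because the $\Z$-action is transitive, so between any two objects there is a positive arrow; and axiom (iii) is precisely the unique factorization $f=(|j-i|).x_i\comp C_{x_i}^{k}$ recorded before the definition of $\Lambda$, since the minimal-arc factor $(|j-i|).x_i$ is positive and completes to $C_{\dom f}$, i.e. satisfies $\mbox{PMin}$.

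The completeness step is where the distinguished cycles pay off, and it proceeds in parallel with Lemmas~\ref{lem:decomp_compatible_with_Xk}--\ref{lemcompterm}. First I would prove a \emph{uniqueness of decomposition} lemma: in any $\T_C$-model, an arrow obtained by successive applications of a generator $\vec z$ with pairwise distinct domains has a unique expression $z_{j+s}\comp\cdots\comp z_{j}\comp l_j(\vec z)^{p}$. Existence of the minimal-arc part is axiom (iii); the exponent $p$ is forced because $l_j(\vec z)=C_{\dom z_j}$ (by axiom (x) of $\G_C$) is a non-trivial cycle of infinite order, for if $C_a^{\,p-p'}=1_a$ with $p\neq p'$ then a positive power of $C_a$ equals $C_a^{-1}$, so $P(C_a)\wedge P(C_a^{-1})$, whence $C_a=1_a$ by axiom (v) of $\bar\G$, contradicting axiom (ix). With this in hand the easy schemes follow: scheme (i) from axiom (i) together with $\tilde\Psi_1(C_a)$; scheme (iv), for arrows, by using totality (axiom (vi) of $\bar\G$) to assume $f\colon a\to b$ positive, factoring $f=\alpha\comp C_a^{\,n}$ with $\alpha$ minimal (axiom (iii)), completing $\alpha$ to an elementary $2$-cycle $\tilde\Psi_2(\alpha,h)$ using $\mbox{PMin}(\alpha)$, and writing $f=z_1\comp l(\vec z)^{\,n}$ — the object case reducing to $\tilde\Psi_1(C_a)$ with $a=\dom(z_1)$.

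The main obstacle is the joint-generation construction underlying schemes (ii), (iii) and (v): producing, from two elementary cycles $\tilde\Psi_n(\vec x)$ and $\tilde\Psi_m(\vec y)$, a single common generator $\tilde\Psi_k(\vec z)$ expressing both as successive composites. The key point is that, because $l(\vec x)=C_{\dom x_1}$ is \emph{exactly} one traversal, one checks each $x_i$ is forced to be a minimal positive arc (a $\mbox{PMin}$ arrow, with no wrapping), and likewise each $y_j$; using axiom (ii) to connect the objects and axiom (iii) to subdivide each arc at the intervening objects, I would take $\vec z$ to consist of the minimal positive arcs between consecutive objects of the cyclically ordered union of $\{\dom x_i\}$ and $\{\dom y_j\}$. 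One then verifies that $\vec z$ satisfies $\tilde\Psi_k$ (its total composite is a single traversal, hence $C_{\dom z_1}$) and that each $x_i$ and $y_j$ is a successive composite of the $z$'s, yielding the required witness; here no analogue of the epicyclic axiom (iv) is needed, since the union-of-objects construction already makes every domain occur exactly once. Feeding this minimal generator into the uniqueness lemma (via compatibility with $X_k$, the cyclic analogue of Lemma~\ref{lem:decomp_compatible_with_Xk}) then discharges schemes (ii), (iii) and (v) exactly as in the epicyclic case, the equalities of terms being read off from the uniqueness of their decompositions in $\vec z$.
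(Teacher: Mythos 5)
Your proposal is correct and follows essentially the same route as the paper's proof: soundness in the $X_n$, reduction to the five schemes of Theorem \ref{extending}, construction of a common elementary cycle generating both given cycles (exploiting that every arrow of an elementary cycle is $\mbox{PMin}$), and uniqueness of decompositions forced by positivity together with non-triviality of the cycles $C_a$. The one step you leave informal --- the ``cyclically ordered union'' of the objects --- is exactly what the paper makes precise by recursion: Lemma \ref{lem310} locates a single new object inside an existing elementary cycle via a positivity-peeling argument, and Lemma \ref{lem:cyclic_generators} iterates this to produce your common generator with pairwise distinct domains.
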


\begin{proof}
	The structure of the proof will be exactly the same as for Theorem \ref{thm:axiomatization}. First we show the validity of the axioms in every $X_n$. Then we
	proceed to check the flatness and isomorphism conditions for the functors $H_{M} : \Lambda\op \rightarrow \mathcal E$ of Theorem 5.1 \cite{OCPT} by using their
	syntactic reformulations as provided by Theorem 6.32 \cite{OCPT}.

	It is immediately clear from their definition that the objects $X_n = \Z \rtimes \{0,\dots,n-1\}$ satisfy the non-triviality of objects and hom-sets conditions (cf. Definition \ref{def:epicyclic} and Remarks \ref{rem:1}). Also, the validity of the remaining axiom expressing the existence of a factorization system through cycles follows at once from Remark \ref{rem:1} (c).

	To show the entailment of the five sequents of Theorem 6.32 (\cite{OCPT}) from the above axiomatization, we will proceed one by one. Note again that here, just as in the epicyclic case, \emph{validity} means valid in models in arbitrary Grothendieck toposes, and not just in $\Set$. So in particular care must be taken in the proof when we occasionally abuse notation and speak of an object $a$, when we really mean a generalized element of the object $M(G_0)$ for a model $M$ in $\T_C \mbox{-mod}(\mathcal E)$, and similarly for arrows or more general terms or formulae. In fact, all our arguments should be interpreted in the standard Kripke-Joyal semantics for toposes.

	\begin{enumerate}[(i)]
		\item From axioms (i) of Theorem \ref{thm:cyclic_axiomatization} and (viii) of $\bar \G_C$, one deduces $ \top \vdash \exists a (P(C_a))$. This implies, in light of axiom (vii) of $\bar \G_C$, the validity of sequent
			\[
				\top \vdash \bigvee_{n \in \N} \big( \exists \vec x (\tilde \Psi_n(\vec x)) \big).
			\]
	\end{enumerate}
	
	Before proceeding further, it will be useful to note a series of lemmas:

	\begin{lemma}
		Given two objects $a$ and $b$ in a model $M$ of ${\mathbb T}_{C}$, there exists a minimal positive arrow $m_{a,b} : a \rightarrow b$ satisfying $\mbox{PMin}(m_{a,b})$.
		\label{lem:existence_posmin_arrows}
	\end{lemma}

	\begin{proof}
		From axiom (ii) we know that there exists a positive arrow $f: a \rightarrow b$. From axiom (iii) of Theorem \ref{thm:cyclic_axiomatization} it thus follows that $(\exists \alpha: a
		\rightarrow b)(\mbox{PMin}(\alpha))$.
	\end{proof}

	\begin{lemma}\label{minlemma}
		The minimal positive arrows $m_{a,b}$ of the previous lemma are either unique, or $a=b$ in which case $m_{a,b}$ can be the identity $1_a$ or the cycle
		$C_a$. Concretely, $\T_C$ entails
		\[
		(\alpha, \alpha': a \rightarrow b) \wedge \mbox{PMin}(\alpha) \wedge \mbox{PMin}(\alpha') \vdash_{\alpha,\alpha': a \rightarrow b} (\alpha = \alpha') \vee
		(a = b).
		\]
		In particular, the decomposition of axiom (iii) of Theorem \ref{thm:cyclic_axiomatization} is \emph{unique}.
		\label{lem:unique_min_pos_arrows}
	\end{lemma}

	\begin{proof}
		Suppose that we are given two objects $a$ and $b$, and positive arrows $\alpha, \alpha': a \rightarrow b$, $\beta, \beta': b \rightarrow a$ such that $\beta \comp \alpha =
		\beta' \comp \alpha' = C_a$. Then $\alpha' \comp \alpha^{-1} = \beta'^{-1} \comp \beta = \beta'^{-1} \comp C_a \comp \alpha^{-1} = (C_b)^{n}$ for some
		$n \in \Z$.
 
Now, $\alpha^{-1 }\cdot \alpha' = (C_a)^{n}$ implies $\alpha^{-1} = (C_a)^{n-1} \cdot \alpha'$. If $n > 0$ the arrow on the right hand side of this latter equality is positive and hence $n=1 $ and $\alpha = 1_a$; in particular, $a=b$. Similarly if $n<0$. If instead $n=0$ then $\alpha = \alpha'$. This proves our thesis.
	\end{proof}

	\begin{lemma}\label{lem310}
		In a model for ${\mathbb T}_{C}$, given an elementary cycle $\vec x$ satisfying $\tilde \Psi_n(\vec x)$ and an object $c$, there exists an integer $1 \le i \le n$ and positive arrows $\alpha, \beta$ such
		that $x_i = \beta \comp \alpha$ and $\dom(\beta) = c$. In particular $\tilde \Psi_{n+1}((\dots,x_{i-1},\alpha,\beta,x_{i+1},\dots))$ is verified.

		In other words, the following sequent is entailed by the axioms of $\T_C$: 
		\[ 
			\tilde \Psi_n(\vec x) \vdash_{\vec x, c} \exists \alpha,\beta \Big( P(\alpha) \wedge P(\beta) \wedge \bigvee_{1 \le i \le n} (x_i = \beta \comp
			\alpha \wedge \dom (\beta) = c) \Big) 
		\] 
	\label{lem:recurrence_for_elemcycles} 
	\end{lemma}

	\begin{proof}

		If $c=\dom(x_i)$ for some $i$, then setting $\alpha=1_{c}$ and $\beta=x_{i}$ clearly does the job. Otherwise, by applying Lemma \ref{minlemma} we see that there exists an arrow $f: \dom(x_1) \rightarrow c$ satisfying $\mbox{PMin}(f)$, whence a positive arrow $g: c \rightarrow \dom(x_1)$ such that $g \comp f = C_{\dom(x_{1})}$. Notice that, since $c\neq \dom(x_1)$, $f \neq 1_{\dom(x_1)}$ and $g \neq 1_{\dom(x_1)}$. Then
		\begin{align*}
			g \comp f &= x_n \comp \cdots \comp x_1  \\ \Rightarrow \;  g &= x_n \comp \cdots \comp x_1 \comp f^{-1}.
		\end{align*}
				Now take $1 \le i \le n$ such that
				\begin{align*}
					x_i^{-1} \comp \cdots \comp x_n^{-1} \comp g &= x_{i-1}\comp \cdots \comp x_1 \comp f^{-1} &\mbox{is still positive, but} \\
					x_{i-1}^{-1}\comp \cdots \comp x_n^{-1} \cdot g &= x_{i-2}\comp \cdots \comp x_1 \comp f^{-1} &\mbox{is not positive anymore.}
				\end{align*} 
				Note that, since $f \neq 1 \neq g$, such an $i$ must exist. Define $\alpha^{-1} = x_{i-2}\comp \cdots \comp x_1 \comp f^{-1}$ to be the
				negative arrow of the last line, and $\beta = x_{i-1}\comp \cdots \comp x_1 \comp f^{-1}$ to be the arrow in the second last line. 
				
				We see that both $\alpha$ and $\beta$ are positive, that $\dom(\beta) = \dom(f^{-1}) = \cod(f) = c$, and lastly $\beta \comp \alpha =
				x_{i-1}$.
	\end{proof}

	\begin{lemma}
				In a model for ${\mathbb T}_{C}$, given two elementary cycles $\tilde \Psi_n (\vec x)$ and $\tilde \Psi_m (\vec y)$, there exists a $\vec z$ satisfying $\tilde \Psi_k (\vec
				z)$ and containing the former two. That is, the $x_i$ and $y_j$ are obtained from $\vec z$ through successive applications:
				\begin{align*}
					& \vdots \\
					x_i &= z_{s_{i+1} -1}\comp \cdots \comp z_{s_i } \\
					x_{i+1} &= z_{s_{i+2} -1}\comp \cdots \comp z_{s_{i+1}} \\
					& \vdots
				\end{align*}
				where the same holds for the $y_j$'s, and of course the indices $s_i$ and their arithmetic are understood in $\Z / k\Z$ as usual.
								
				Moreover, without loss of generality, this $\vec z$ may be assumed not to contain identities; in particular, $\dom z_i = \dom z_j$ implies
				$i=j$, so that, in light of Lemma \ref{lem:unique_min_pos_arrows}, decompositions in terms of $\vec z$ are \emph{unique}.
				\label{lem:cyclic_generators}
	\end{lemma}

	\begin{proof}
			We shall deduce the existence of an elementary cycle $\vec z$ containing $\vec x$ and $\vec y$ from Lemma \ref{lem310} by recursion, as follows. 

			Set $\vec z^{(0)} = \vec x$.

			Given $\vec z^{(j)}$, apply Lemma \ref{lem310} to it and $\dom( y_{j+1})$ in order to obtain $\vec z^{(j+1)}$.
			 Stop after $m$ steps and set $\vec z =
			\vec z^{(m)}$.

			It is clear that all the $x_i$'s and $y_j$'s are obtained by successive applications from $\vec z$; also, by construction and Lemma \ref{lem310}, $\tilde \Psi_k (\vec z)$.

			Concerning the last part of the proposition, it is obvious that one can remove all identical arrows from $\vec z$ without affecting the satisfaction of the desired property.
			Suppose that this is done, and assume that $\dom z_i = \dom z_j =: b$ (without loss of generality $i < j$). Then $(z_{j-1} \comp \cdots \comp z_i) : b \rightarrow b$, so it can be either
			the identity or $(C_b)^n$ (for some $n >0$). Since we have removed all the identities from $\vec z$ and each of its arrows is positive, the former cannot be
			the case, so $(z_{j-1} \comp \cdots \comp z_i)=(C_b)^n$. Then $z_{i-1}\comp \cdots \comp z_j = (C_b)^{-n + 1}$, and by positivity of the left hand side $n \le 1$. So $n=1$. But then $z_{i-1} \comp \cdots
			\comp z_j = 1_{\dom(z_j)}$, which implies, since $\vec{z}$ does not contain identities, that $i = j$.
	\end{proof}
			
	In light of these lemmas, particularly \ref{lem:cyclic_generators}, the remainder of the proof is easy. Let us turn towards the remaining sequents of Theorem
	6.32:

	\begin{enumerate}[(i)]
			\setcounter{enumi}{1}
		\item With the help of Lemma \ref{lem:cyclic_generators}, the validity of sequent (ii) is almost immediate, as it ensures the existence of an elementary
			cycle $\vec z$ containing $\vec x$ and $\vec y$. In particular, $\vec{x}$ and $\vec{y}$ are obtained \emph{uniquely} through successive applications from $\vec z$:
				\begin{align*}
					& \vdots \\
					x_i &= z_{s_{i+1} -1} \cdots z_{s_i } =: t_i (\vec z)\\
					x_{i+1} &= z_{s_{i+2} -1} \cdots z_{s_{i+1}} =: t_{i+1}(\vec z) \\
					& \vdots
				\end{align*}
			What remains to be shown is that $\tilde \Psi_n (t_1 (\vec \xi),\dots,t_n (\vec \xi))$ in $X_k$, and similarly for $\vec{y}$.

		Now, since 
		\[
			\dom (t_{i+1}(\vec \xi)) = \dom (\xi_{s_{i+1}}) = \cod (\xi_{s_{i+1}-1}) = \cod (t_i(\vec \xi)) 
		\]
		$(t_i (\vec \xi))$ satisfies $L_n$. Moreover, $(t_i (\vec \xi))$ satisfies also $P$, since the $\xi_i$ are positive and positivity is stable under composition. It remains to show
		that 
		\[
			t_n (\vec \xi) \comp \cdots \comp t_1 (\vec \xi) = C_{\dom(t_1(\vec \xi))}.
		\]
		But we know that
		\[
			t_n (\vec z) \comp \dots \comp t_1 (\vec z) = \underbrace{z_{k} \comp \cdots \comp z_1}_{l(\vec z)^p} = \underbrace{x_n \comp \cdots \comp x_1}_{l(\vec x)} = C_{\dom(x_1)}
		\]
		whence $l(\vec z)^p = C_{\dom(x_1)}$. From axiom (iii) of theory $\bar{\G}_C$ and axiom (iii) of theory ${\mathbb T}_{C}$, it thus follows that $p=1$. But this is exactly what we wanted, as now
		\[
			t_n (\vec \xi) \comp \cdots \comp t_1 (\vec \xi) = l(\vec \xi) = C_{\dom(\xi_1)}.
		\]
			This shows the validity of sequent (ii).




		\item Suppose that we are given a cycle $\vec y$ satisfying $\tilde \Psi_m (\vec y)$, and terms $t_i (\vec y), s_i (\vec y)$
			($1 \le i \le n$) such that $\tilde \Psi_n (t_1 (\vec y),\dots,t_n (\vec y))$, $\tilde \Psi_n (s_1 (\vec y),\dots,s_n (\vec y))$ and $t_i
			(\vec y) = s_i (\vec y)$ for all $i$.

			Let $\vec z$ be an elementary generating cycle without identities provided by Lemma \ref{lem:cyclic_generators} for the two cycles $\vec y$ and $(t_1 (\vec y),\dots,t_n
			(\vec y)) = (s_1 (\vec y),\dots,s_n (\vec y))$. In particular, $\vec{y}$ is obtained via successive applications from $\vec z$. 		
			It is immediate to see, by using arguments similar to those in the proof of Lemma \ref{lem:decomp_compatible_with_Xk}, that $(u_1 (\vec \xi),\dots,u_m (\vec \xi))$ satisfies $\tilde \Psi_m$ in $X_k$, where $y_j = u_j (\vec z)$. 
			
			By the analogue of Lemma \ref{lemcompterm} in the cyclic setting, for each $i$ the terms $t_i(u_1(\vec z),\dots,u_m(\vec z))$ and $s_i(u_1(\vec z),\dots,u_m({\vec z}))$ are successive in $\vec z$. Lemma \ref{lem:unique_min_pos_arrows} thus yields that $t_i (u_1 (\vec \xi),\dots,u_m(\vec \xi)) = s_i (u_1(\vec \xi),\dots,u_m (\vec \xi))$ for all $i$, which is precisely what we needed to show.
			
		\item 
			Given an arrow $f: a \rightarrow b$, axiom (iii) ensures that $f = \alpha \comp (C_a)^n$ for some $\alpha$ satisfying $\mbox{PMin}(\alpha)$, whence there exists a positive arrow
			$\beta: b \rightarrow a$ such that $\beta \comp \alpha = C_a$. Then $\tilde \Psi_2 (\alpha,\beta)$ is satisfied and $f = \alpha \comp (\beta \comp
			\alpha)^n$.
		\item  
			Suppose that we are given two elementary cycles $\vec x$ and $\vec y$, and two terms $t$ and $s$ of sort $G_{1}$ satisfying $t(\vec x) = s(\vec y)$. Let $\vec z$ be an elementary generating cycle without identities provided by Lemma \ref{lem:cyclic_generators} for the two cycles $\vec x$ and $\vec y$. So $\vec x$ and $\vec y$ are obtained \emph{uniquely} by successive applications from $\vec z$, say
			$x_i = p_i (\vec z)$ and $y_j = q_j (\vec z)$. This ensures, by using arguments similar to those in the proof of Lemma \ref{lem:decomp_compatible_with_Xk}, that $(p_1 (\vec \xi),\dots,p_n (\vec \xi))$ and $(q_1 (\vec \xi),\dots,q_m
			(\vec \xi))$ satisfy $\tilde \Psi_n$ and $\tilde \Psi_m$ respectively in $X_k$.

			Lastly, to show that $t(p_1 (\vec \xi),\dots,p_n(\vec \xi)) = s(q_1 (\vec \xi),\dots,q_m (\vec \xi))$ holds in $X_k$, one argues as in point (iii) above.  			
			
			The case of terms of sort $G_0$ follows at once from that of terms of sort $G_{1}$ just considered, by using the identification between objects and identical arrows on them.
	\end{enumerate} 
\end{proof}

\section{The topos $[{\mathbb N}^{\ast}, \Set]$}

As realized by A. Connes and C. Consani, the category $\N^*$, viewed as a multiplicative monoid, and its associated topos of set-valued functors
$\widehat{\N^*} = [\N^*, \Set]$ takes a special role in the general framework $\Delta \hookrightarrow \Lambda \hookrightarrow \tilde \Lambda$. Details and proofs of the
following introductory paragraphs will be shortly available in    a paper by them (``The cyclic and epicyclic sites'' - work in progress).

The monoid $\N^*$ acts via ``barycentric subdivision'' functors $\mbox{Sd}_k : \Delta \rightarrow \Delta$ on the simplicial category $\Delta$ and its opposite
$\mbox{Sd}^*_k : \Delta\op \rightarrow \Delta\op$. In fact these actions extend to an action $\Psi_k : \Lambda \rightarrow \Lambda$ on the cyclic category $\Lambda$, by
means of which an alternative characterization of the epicyclic category as $\tilde \Lambda \simeq \Lambda \ltimes \N^*$ can be obtained.

Now, as mentioned above, one can view $\Lambda$ as a full subcategory of the category $\mathfrak{Arc}$ of archimedian sets introduced in \cite{CC3}, thereby characterizing $\tilde \Lambda$ as a full
subcategory of $\mathfrak{Arc} \ltimes \N^*$ with objects of the form $\underbar{n} = (\Z, \theta)$ where $\theta(x) = x + n + 1$. The category $\mathfrak{Arc} \ltimes
\N^*$ in turn is a full and faithful subcategory of the category $\G_T\mbox{-mod}(\Set)$ of oriented groupoids with non-triviality predicate $T$ defined in section \ref{epic}.

Indeed, consider the functor $\mbox{Mod}: \tilde \Lambda \rightarrow \N^*$ which sends objects $X_n$ to $\star \in \N^*$ and arrows $H: X_n \rightarrow X_m$ to the integer $k \ge 1$ defined by the condition that the elementary cycle $l(\vec \xi^{(n)})$ of $X_n$ gets mapped to the $k$-th power of the elementary cycle in $X_m$: $l(\vec \xi^{(m)})^k = H(l(\vec \xi^{(n)}))$. Note that $k \ne 0$ because $H$ preserves $T$, and $k \nless 0$ because $H$ preserves $P$. This functor $\mbox{Mod}$ is part of a geometric morphism, and hence one obtains a
natural ``section'' $\imath: \widehat{\N^*} \rightarrow \tilde \Lambda$ which allows to lift descriptions of points of $\N^*$ to those of $\tilde \Lambda$. This strategy has been
pursued by Connes and Consani and resulted in interesting connections to Tits' ideas of characteristic one (cf. their above-mentioned forthcoming paper ``The cyclic and epicyclic sites'', \cite{CC6} and \cite{Tits}).

In fact, the category of points of $\widehat{\N^*}$ is shown in \cite{CC5} to correspond to that of non-trivial ordered subgroups $(H,H^+)$ of the ordered group $(\Q,
\Q^+)$ and non-trivial order-preserving group homomorphisms between them (note that these are precisely the homomorphisms between such groups viewed as models of the \emph{injectivization} of the
theory of (totally) ordered groups, as defined below). These points were in turn equivalently characterized as the algebraic extensions of the tropical integers $\mathbb F = \Z_{\mbox{max}} \subset H \subset
\overline{\mathbb F} = \Q_{\mbox{max}}$.

Again, our aim is to directly describe not only the category of points, but also a theory whose classifying topos is $\widehat{\N^*}$. 

\begin{definition}
	The geometric theory of \emph{partially ordered groups} $\mathbb{O}$ is obtained by adding to the (algebraic) theory of groups (where the constant $1$ denotes the neutral element of the group) a predicate $P$ for positivity and
	the following axioms:
	\begin{enumerate}[(i)]
		\item $\top \vdash P(1)$
		\item $P(a) \wedge P(b) \vdash_{a,b} P(a \cdot b)$
		\item $P(a) \vdash_{a,c} P(c^{-1} \cdot a \cdot c)$
		\item $P(a) \wedge P(a^{-1}) \vdash_a a=1$
	\end{enumerate}

	The theory \emph{totally ordered groups}, denoted by $\overline{\O}$, is obtained by adding the axiom
	\begin{enumerate}[(i)]
			\setcounter{enumi}{4}
		\item $\top \vdash_a P(a) \vee P(a^{-1})$
	\end{enumerate}
	
	One can axiomatize the \emph{injectivization} of $\overline{\O}$ by enriching the language with a binary relation symbol $\ne$, expressing non-equality, and adding the
	following axioms:
	\begin{enumerate}[(i)]
			\setcounter{enumi}{5}
		\item $x \ne x \vdash_x \bot$
		\item $\top \vdash_{x,y} (x \ne y) \vee (x = y)$
	\end{enumerate}
	We will denote this theory by $\overline{\O_{\ne}}$. Its \emph{partial} counterpart is obtained by omitting axioms (v) and (vii) and will be denoted by
	$\O_{\ne}$.
\end{definition}

Once more, we have a series of lemmas enabling us to apply Theorem 6.29 \cite{OCPT}:

\begin{lemma}
	The theory $\O_{\ne}$ of partially ordered groups is of presheaf-type.
	\label{lem:O_of_presheaftype}
\end{lemma}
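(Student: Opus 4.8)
The plan is to mimic the structure of the presheaf-type proofs already established for $\mathbb{G}_T$ (Lemma \ref{lem:Gofpresheaf}) and $\mathbb{G}_C$ (Lemma \ref{lem:cyclic_groupoids_of_presheaftype}), since $\mathbb{O}_{\ne}$ is built on the same pattern: an algebraic/cartesian core enriched with a small number of geometric axioms. First I would recall that the theory of groups is algebraic, hence cartesian, and that cartesian theories are of presheaf type. Then I would examine the additional axioms of $\mathbb{O}_{\ne}$ one by one, namely axioms (i)--(iv) for positivity and axiom (vi) for the apartness-style predicate $\ne$ (axioms (v) and (vii) having been dropped in passing to the partial counterpart).

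The key observation is that axioms (i), (ii), (iii), and (iv) for the positivity predicate $P$ are all \emph{cartesian}: each is a Horn-like sequent whose premise and conclusion are conjunctions of atomic formulae (or the predicate $P$ applied to a term built from the group operations), with no existential quantifiers or disjunctions on the right. So the subtheory consisting of the group axioms together with (i)--(iv) remains cartesian, hence of presheaf type. The only axiom of $\mathbb{O}_{\ne}$ that is not cartesian is axiom (vi), $x \ne x \vdash_x \bot$, which is precisely of the form $\phi \vdash_{\vec x} \bot$. This is exactly the shape of geometric axiom covered by Theorem 6.28 of \cite{OCPT}, which guarantees that the property of being of presheaf type is stable under adding axioms of the form $\phi \vdash_{\vec x} \bot$.

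Therefore the proof strategy is: establish that the theory axiomatized by the group axioms plus (i)--(iv) plus the axiom $\top \vdash_{x,y}(x \ne y)\vee(x=y)$ \emph{minus} axiom (vi) is cartesian and hence of presheaf type, then apply Theorem 6.28 to adjoin axiom (vi). I should take a moment to check that the relation symbol $\ne$ does not introduce difficulties: in $\mathbb{O}_{\ne}$ only axiom (vi) constrains it (since (vii) is omitted in the partial version), so $\ne$ behaves as a freely adjoined binary predicate governed by a single falsity sequent, which is benign.

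The main obstacle to watch for is a subtle one about what counts as cartesian: one must verify that none of axioms (i)--(iv), once $\ne$ is present, secretly requires a disjunction or existential in a way that would break cartesianness, and that the definitional extension by the predicate $\ne$ (with no positive axioms governing it in the partial theory) keeps the relevant provably-functional conditions trivial. Since every such axiom is Horn over the original signature and $\ne$ appears only in the falsity sequent (vi), this verification is routine, and the conclusion follows immediately by combining the cartesian-theory fact with Theorem 6.28 of \cite{OCPT}, exactly as in Lemmas \ref{lem:Gofpresheaf} and \ref{lem:cyclic_groupoids_of_presheaftype}.
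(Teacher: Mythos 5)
Your core argument (second paragraph) is exactly the paper's proof: the group axioms together with the positivity axioms (i)--(iv) form a cartesian theory over the enlarged signature, axiom (vi) is of the form $\phi \vdash_{\vec{x}} \bot$, and Theorem 6.28 of \cite{OCPT} then yields the result. Had the proposal consisted only of that paragraph, it would coincide with the paper's (two-line) proof.

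However, your third paragraph, where you state the proof strategy to be executed, contradicts this and contains a genuine error: you propose to show that the theory consisting of the group axioms, (i)--(iv), \emph{and} axiom (vii), namely $\top \vdash_{x,y}(x \ne y)\vee(x=y)$, is cartesian, and only then to adjoin (vi) via Theorem 6.28. This fails on two counts. First, (vii) is not an axiom of $\O_{\ne}$ at all --- as you yourself note in the surrounding paragraphs, the partial counterpart omits (v) and (vii) --- so carrying out this plan would establish presheaf type for the wrong theory; nor would the detour be harmless, since Theorem 6.28 only covers axioms of the form $\phi \vdash_{\vec{x}} \bot$ and gives no licence to add a disjunctive axiom like (vii). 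Second, the intermediate theory you describe is not cartesian: cartesian sequents admit no disjunctions, and no cartesian re-axiomatization of (vii) can exist, because categories of models of cartesian theories are closed under finite products, whereas decidability of $\ne$ (interpreted componentwise) already fails in $\Z \times \Z$: for the elements $(0,0)$ and $(1,0)$, neither $(0,0)\ne(1,0)$ holds (this would require $0 \ne 0$ in the second coordinate) nor does $(0,0)=(1,0)$. The fix is simply to drop (vii) from the strategy: the cartesian fragment is the group axioms plus (i)--(iv), with $\ne$ a freely adjoined predicate constrained by no axioms whatsoever, and Theorem 6.28 then adds axiom (vi). With that correction, your proof is the paper's.
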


\begin{proof}
	Axioms (i)-(iv) are cartesian, and theories of presheaf-type are stable under the addition of geometric axioms of the form $\phi \vdash_{\vec{x}} \bot$.
\end{proof}

\begin{lemma}
	The ordered group $\Z$ is finitely presented, as a model of the theory $\O_{\ne}$, by the formula 
	\[
		\big\{ x . P(x) \wedge x \ne 1 \big\}.
	\]
	\label{lem:Z_fin_pres}
\end{lemma}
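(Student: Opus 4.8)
The plan is to establish a natural bijection between $\O_{\ne}$-model homomorphisms $H: \Z \to G$ and the elements of the interpretation $\llbracket x.\, P(x) \wedge x \ne 1 \rrbracket_G$, naturally in $G$, following the same pattern as Lemma \ref{lem:Xn_fin_pres} and Lemma \ref{lem:Xn_cyclic_fin_pres}. The key observation is that $\Z$, as an ordered group, is presented (in the ordinary algebraic sense) by a single generator, namely the element $1 \in \Z$, which is positive and distinct from the neutral element. So the content of the lemma is that the extra structure of $\O_{\ne}$ (the positivity predicate $P$ and the apartness relation $\ne$) is exactly captured by the defining formula.

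First I would fix the presenting formula $\phi(x) := P(x) \wedge x \ne 1$ and set up the forward map: given a homomorphism $H: \Z \to G$, send it to $H(1) \in G(\text{sort})$. Since $1 \in \Z$ satisfies $P$ (it is the positive generator) and $1 \ne 0$ in $\Z$ (the apartness of the injectivization holds because $\Z$ is a genuine set with decidable equality), and since $H$ preserves both $P$ and $\ne$ as an $\O_{\ne}$-model homomorphism, the image $H(1)$ lies in $\llbracket x.\, \phi \rrbracket_G$. For the reverse direction, given any $g \in \llbracket x.\, \phi \rrbracket_G$, I would define $H$ by sending the generator $1 \mapsto g$ and extending via the group operations: $H(n) := g^n$ for all $n \in \Z$. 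This is forced and well-defined because $\Z$ is freely generated as a group by $1$, so there is a unique group homomorphism with this value on the generator.

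The substantive part is checking that this assignment is an \emph{$\O_{\ne}$-model} homomorphism, i.e. that it preserves $P$ and $\ne$, not merely the group structure. For $P$: I must verify $H(n) \in \llbracket P \rrbracket_G$ whenever $n \in \llbracket P \rrbracket_{\Z}$, that is, whenever $n \ge 0$. For $n \ge 0$ this amounts to $g^n \in \llbracket P \rrbracket_G$, which follows from $P(g)$ together with axioms (i) and (ii) of $\O$ (positivity of $1$ and closure of $P$ under the group multiplication), by induction on $n$. For $\ne$: I must check that $n \ne m$ in $\Z$ implies $H(n) \ne H(m)$, i.e. $g^n \ne g^m$, in $G$. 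This reduces, after translating by $g^{-m}$, to showing $g^{k} \ne 1$ for all $k \ne 0$; here one uses that $g \ne 1$ together with the fact that $g$ is positive and generates, so that its nonzero powers remain distinct from the identity.

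The step I expect to be the main obstacle is precisely this last point — verifying the preservation of apartness, namely that $g^k \ne 1$ for every nonzero $k$ from the single hypothesis $g \ne 1$ (together with $P(g)$). In the case $k > 0$ one can argue that $g^k = 1$ would force $g$ and $g^{-1}$ both positive (since $g^{-1} = g^{k-1}$ is a product of positives), whence $g = 1$ by axiom (iv), contradicting $g \ne 1$; the case $k < 0$ follows symmetrically by inverting. This is where the choice of $\ne$ as a \emph{primitive} apartness predicate in the injectivization (rather than negated equality) matters, and where axioms (iv) and the positivity closure do the real work. Once preservation of $P$ and $\ne$ is secured, naturality in $G$ and the fact that the two assignments are mutually inverse are routine, completing the proof that $\big\{ x.\, P(x) \wedge x \ne 1 \big\}$ finitely presents $\Z$ as an $\O_{\ne}$-model.
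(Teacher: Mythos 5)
Your proposal follows essentially the same route as the paper's (much terser) proof: $\Z$ is free as a group on the single generator $1$, so a homomorphism $H\colon \Z \to G$ is determined by $H(1)$, which must satisfy $P$ and $\ne 1$ because $H$ preserves the two predicates; conversely, any $g$ satisfying the presenting formula should induce such a homomorphism. The paper only records the forward direction and treats the converse as obvious, so your explicit verification that $n \mapsto g^n$ preserves $P$ (induction from axioms (i) and (ii)) and preserves $\ne$ (the infinite-order argument from axioms (ii), (iv) and irreflexivity) is a genuine filling-in of detail; note that your infinite-order argument is precisely the content of the paper's later Lemma \ref{lem:no_cycles}, that ordered groups are torsion-free.

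There is, however, one caveat, concerning exactly the step you single out as the main obstacle. What your argument establishes is $\neg(g^k = 1)$ for $k \ne 0$; you then conclude $g^k \ne 1$. In the theory $\O_{\ne}$ as literally defined, $\ne$ is a \emph{primitive} relation subject only to the irreflexivity axiom (vi): axiom (vii), which would force $\ne$ to coincide with negated equality, is dropped precisely so that the presheaf-type argument of Lemma \ref{lem:O_of_presheaftype} applies. Under that literal reading, neither the inference from $\neg(g^k = 1)$ to $g^k \ne 1$ nor the ``translation by $g^{-m}$'' step (which requires $\ne$ to be translation-invariant) is licensed; indeed, in the literal semantics the formula presents $\Z$ equipped with the minimal irreflexive relation relating only the generator to the identity, not with genuine inequality. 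So your closing remark that the primitivity of $\ne$ is ``where the real work happens'' is exactly backwards: your argument is valid only when $\ne$ is read as actual inequality, i.e.\ when preserving $\ne$ means being injective. That said, this is the very convention the paper itself adopts --- its proof simply says ``since $H$ is injective, it must also be non-zero'', and the identification of $\O_{\ne}$-model homomorphisms with injective homomorphisms is likewise what Lemma \ref{lem:N*_full_subcat} relies on --- so your proof is correct to exactly the same extent as the paper's, and the subtlety it exposes lies in the paper's set-up rather than in your argument.
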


\begin{proof}
	$\Z$ being the free group of one generator, it is clear that any homomorphism $H:\Z \rightarrow G$ is uniquely determined by its value at 1. Since 1 is positive, its
	image must be positive too, and since $H$ is injective, it must also be non-zero.
\end{proof}

\begin{lemma}
	The category $\N^*$ is a \emph{full} subcategory of the category $\O_{\ne}\mbox{-mod}(\Set)$ of partially ordered groups with injective homomorphisms, and in particular also of its full subcategory f.p.$\O_{\ne}\mbox{-mod}(\Set)$ of finitely presentable models.
	\label{lem:N*_full_subcat}
\end{lemma}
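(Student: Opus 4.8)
The plan is to realize $\N^*$, regarded as a one-object category whose endomorphism monoid is $(\N^*, \cdot)$, as the full subcategory of $\O_{\ne}\mbox{-mod}(\Set)$ on the single object $\Z$, where $\Z$ carries the positivity predicate $P(n)$ (holding iff $n \ge 0$) and $\ne$ is interpreted as genuine non-equality. Concretely, I would define a functor $F$ sending the unique object of $\N^*$ to $\Z$ and each $k \in \N^*$ to the endomorphism $\mu_k : \Z \to \Z$, $n \mapsto kn$, of multiplication by $k$. The whole statement then reduces to identifying the endomorphism monoid $\mathrm{End}_{\O_{\ne}\mbox{-mod}(\Set)}(\Z)$ with $(\N^*, \cdot)$ via $F$.

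First I would verify that $F$ is well defined and functorial. Each $\mu_k$ (with $k \ge 1$) is a group homomorphism; it preserves $P$ because $n \ge 0$ implies $kn \ge 0$, and it preserves the predicate $\ne$ because $k \ne 0$ makes it injective, so $\mu_k$ is a genuine $\O_{\ne}$-model homomorphism. Since $\mu_1 = \mathrm{id}_\Z$ and $\mu_k \circ \mu_l = \mu_{kl}$, the assignment respects identities and composition, matching multiplication in $\N^*$; and $F$ is faithful because $\mu_k$ is determined by the value $\mu_k(1) = k$, so distinct $k$ give distinct maps.

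The heart of the argument, and the one step deserving care, is fullness: every $\O_{\ne}$-model endomorphism $H$ of $\Z$ must be of the form $\mu_k$ for some $k \in \N^*$. As $\Z$ is the free group on one generator, $H$ is, as a group homomorphism, multiplication by $m := H(1) \in \Z$; preservation of $P$ then forces $m \ge 0$ (since $P(1)$ holds in $\Z$), and preservation of $\ne$ forces $H$ to be injective, whence $m \ne 0$. Thus $m \ge 1$ and $H = \mu_m$, so $F$ is full. Being full, faithful and (trivially) injective on objects, $F$ identifies $\N^*$ with a full subcategory of $\O_{\ne}\mbox{-mod}(\Set)$. Finally, Lemma \ref{lem:Z_fin_pres} exhibits $\Z$ as a finitely presentable $\O_{\ne}$-model, so this full subcategory lands inside f.p.$\O_{\ne}\mbox{-mod}(\Set)$, which yields the second assertion. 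The only potential pitfall is bookkeeping the variance of composition, i.e. checking that the multiplicative monoid structure of $\N^*$ is matched covariantly by $F$ rather than contravariantly; there is no deeper obstacle.
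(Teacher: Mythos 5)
Your proposal is correct and follows essentially the same route as the paper: both identify $\O_{\ne}$-model endomorphisms of $\Z$ with multiplication maps $\mu_k$ ($k \ge 1$) by using freeness of $\Z$, preservation of $P$ to force $k \ge 0$, and preservation of $\ne$ to exclude $k = 0$, and both invoke Lemma \ref{lem:Z_fin_pres} for the finitely presentable part. Your write-up merely spells out the functoriality, faithfulness and fullness checks that the paper's proof leaves implicit.
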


\begin{proof}
Group homomorphisms $\Z \rightarrow \Z$ correspond precisely to integers $n$, namely $x \mapsto nx$. The requirement of preserving the order $P$ gives $n \ge 0$, and the injectivity condition of preserving $\ne$ excludes the
	case $n=0$. Hence $\O_{\ne}$-model homomorphisms of $\Z$ correspond set-wise to $\N^*$, and since $m(nx) = (mn)x$ this correspondence is functorial.

	The second part is also immediate in light of Lemma \ref{lem:Z_fin_pres}.
\end{proof}

The following theorem describes an axiomatization of the theory $\T_N$ classified by the subtopos $\widehat{\N^*}$ of the classifying topos for $\O_{\ne}$, whose existence is ensured by the two previous lemmas and Theorem 6.29 (\cite{OCPT}).

\begin{theorem}
	The geometric theory $\T_N$ classified by the topos $\widehat{\N^*}$ is obtained by adding to the injective theory of ordered groups $\overline{\O_{\ne}}$ the
	following axioms:
	\begin{enumerate}[(i)]
		\item $\top \vdash \exists x (x \ne 1)$ 		
		\item $P(x) \wedge P(y) \vdash_{x,y} \bigvee_{n,m \in \N_{0}} \exists z (P(z) \wedge (x=z^n) \wedge (y = z^m))$
	\end{enumerate}
	\label{thm:axiomatization_N*}
\end{theorem}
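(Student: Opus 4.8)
The plan is to follow exactly the same two-part strategy used in the proofs of Theorem \ref{thm:axiomatization} and Theorem \ref{thm:cyclic_axiomatization}. First I would verify that both axioms (i) and (ii) are valid in the generating model, namely in $\Z$ regarded as an object of $\N^*$ (i.e. as a finitely presented $\O_{\ne}$-model presented by $\{x.\, P(x)\wedge x\neq 1\}$ via Lemma \ref{lem:Z_fin_pres}). Axiom (i) is immediate, since $\Z$ contains the element $1\in\Z$ (the generator), which is positive and distinct from the neutral element. For axiom (ii), given two positive elements $x,y\in\Z$, both are non-negative multiples of the generator, say $x=n\cdot g$ and $y=m\cdot g$; taking $z=g$ (the positive generator) witnesses $x=z^n$, $y=z^m$ with $z$ positive, so the disjunct indexed by $(n,m)$ holds. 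Thus both new axioms hold in every object of $\N^*$.

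The second and more substantial part is to show that, added to $\overline{\O_{\ne}}$, these axioms entail the five (schemes of) sequents of Theorem \ref{extending} for the presheaf-type theory $\O_{\ne}$ and the full subcategory $\N^*\subseteq$ f.p.$\O_{\ne}\mbox{-mod}(\Set)$. Here the set $\cal P$ of presenting formulae consists of the single formula $\phi(x):\; P(x)\wedge x\neq 1$ (up to the trivial renaming of its one variable), which presents $\Z$ with generator the element $1\in\Z$. Because $\cal P$ is a \emph{singleton} and the theory is single-sorted with a group structure, the terms $t(\vec z)$ available are exactly the words in one variable, i.e. the integer powers $z^k$; this drastically simplifies every disjunction in Theorem \ref{extending}. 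I would treat the five conditions in order: condition (i) becomes $\top\vdash_{[]}\exists x(P(x)\wedge x\neq 1)$, which follows from axiom (i) together with totality (axiom (v) of $\overline{\O}$) — given $x\neq 1$, either $x$ or $x^{-1}$ is positive, and both are $\neq 1$.

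Conditions (ii) and (v), the amalgamation/flatness conditions, are exactly where axiom (ii) does its work, and this is the step I expect to be the main obstacle. Given two elements $x,y$ each satisfying $P(x)\wedge x\neq 1$, axiom (ii) produces a positive $z$ with $x=z^n$ and $y=z^m$; to land in the sequent (ii) of Theorem \ref{extending} I must ensure $z$ itself presents an object of $\N^*$, i.e. that $z$ can be taken with $z\neq 1$, and then read off the generating formula $\chi(z)=P(z)\wedge z\neq 1$ together with the terms $t(z)=z^n$, $s(z)=z^m$; the condition that $(z^n,z^m)$ lie in the appropriate interpretations over the presented model $\Z$ amounts to checking that $z^n$ and $z^m$ are positive and nontrivial in $\Z$, which holds provided $n,m\geq 1$, and the degenerate cases $n=0$ or $m=0$ (forcing $x=1$ or $y=1$) are excluded by the hypothesis $x,y\neq 1$. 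Condition (v) is handled in the same manner, additionally carrying along a term equality $t(x)=s(y)$ and verifying it transports to the generating model $\Z$ via the uniqueness of the exponent (an integer power of a nontrivial positive element determines its exponent, by the torsion-freeness encoded in axioms (iv) and (vi)). Conditions (iii) and (iv) are then routine: (iv) is the statement that every element is an integer power of a generator, which follows from axiom (ii) applied with $y=x$ (or directly from totality plus axiom (i)), and (iii), the term-equality/epimorphism condition, reduces as in the previous theorems to the \emph{uniqueness} of the exponent of a nontrivial positive element, so that any provable equality of such power-terms already holds in the presenting model $\Z$. The crux throughout is this uniqueness of exponents, which plays here the role that Lemma \ref{lem:unique_k-decomposition} and Lemma \ref{lem:unique_min_pos_arrows} played in the two earlier proofs.
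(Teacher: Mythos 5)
Your proposal is correct and follows essentially the same route as the paper: verify the two axioms in $\Z$, then check the five sequents of Theorem \ref{extending} for the singleton set of presenting formulae $\{x.\, P(x)\wedge x\neq 1\}$, with the uniqueness of exponents (torsion-freeness of totally ordered groups, the paper's Lemma \ref{lem:no_cycles}) doing the work that the uniqueness-of-decomposition lemmas did in the epicyclic and cyclic cases. The only caveat is that your primary route for condition (iv) — axiom (ii) with $y=x$ — does not by itself handle $x=1$ (where axiom (ii) may return the trivial $z$) or negative $x$, but your parenthetical fallback via totality and axiom (i) is exactly the paper's argument, so there is no real gap.
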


\begin{proof}
	Remember that the category $\N^*$ has been identified with the full subcategory of $\O_{\ne}\mbox{-mod}(\Set)$ on the model $\Z$ in the proof of lemma \ref{lem:N*_full_subcat}. It is clear that the natural order is total, so $\Z$ is a model of $\overline{\O_{\ne}}$. It also clearly satisfies the above axioms.

	To show that the axioms are also sufficient for the full (geometric) theory Th$(\Z)$ in the language of $\O_{\ne}$, we proceed once more checking the validity of sequents (i)-(v) of Theorem 6.32 \cite{OCPT}. 
	
	Note that, as observed in Remarks 5.4 (b) and 5.8 (a) in \cite{OCPT}, the validity of sequents (iii) and (v) will follow automatically from that of (ii) due to the fact that all the model homomorphisms are \emph{monic}. Nonetheless, we shall prove them explicitly below, for the sake of clarity and completeness. To this end, we note the following 

	\begin{lemma}
		The theory $\T_N$ entails the validity of the following sequent:
		\[
			(x \ne 1) \wedge P(x) \vdash_x \bigwedge_{k \in \N} x^k \ne 1.
		\]
		In other words, $\T_{N}$-groups (that is, the groups in $\T_N\mbox{-mod}(\Set)$) are torsion-free.
		\label{lem:no_cycles}
	\end{lemma}

	\begin{proof}
		If $P(x)$ then $P(x^k)$ for all $k \in \N$ and if $x \ne 1$ none of the powers $x^k$ can be the inverse to $x$. In fact this already holds for  $\overline{\O}$, which just corresponds to the well-known fact that totally ordered groups are torsion-free.
	\end{proof}

	\begin{enumerate}[(i)]
		\item Axiom (i) ensures the existence of an $(x \ne 1)$; if $P(x)$ we are done, otherwise $P(x^{-1})$ and we are done too.
		\item Given two positive $x \ne 1 \ne y$, axiom (ii) ensures the existence of a positive $z$ such that $x = z^n$ and $y = z^m$. Clearly $z$ cannot be the
			identity, and $n$ and $m$ cannot be 0. Therefore it holds in $\Z$ that $m\cdot 1 \gneq 0 \lneq n\cdot 1$, which shows validity of sequent (ii).
		\item Given a positive $y\neq 1$ and two terms $t(y)$ and $s(y)$ which are equal and strictly positive too, the latter must be positive powers of $y$ of the same exponent by Lemma \ref{lem:no_cycles}, which proves the validity of sequent (iii).
		\item  Given any $x$, if $x=1$ then by applying axiom (i) we obtain the existence of a non-zero $y$, without loss of generality positive by the totality of the order, whence $x = 1 = 1(y)$. If $x > 0$ then we are done immediately, and if $x<0$ then $x^{-1}>0$ and $x =(x^{-1})^{-1}$. This proves the validity of sequent (iv).
		\item Lastly, suppose given two positive $x,y\neq 1$ and terms $t(x) = s(y)$. By an easy induction over terms it is readily seen that this means $x^{p} = y^{q}$ with $p,q \in \Z$. Now, axiom (ii) ensures the existence of a positive $z\neq 1$ for which $x = z^n$ and $y = z^m$; therefore $z^{p+n} = z^{q+m}$ and hence $p+n = q +m$ because of non-torsion (cf. Lemma \ref{lem:no_cycles}). So $(p+n)\cdot 1 = (q+m) \cdot 1$ in $\Z$, which is all we needed.
	\end{enumerate}
\end{proof}

Applying this result to what is already known about the points of $\widehat{\N^*}$, we see that the $\T_N$-groups are precisely the ordered groups which are isomorphic to non-trivial subgroups of $\Q$. Indeed, in \cite{CC5} the following result was obtained:

\begin{theorem}
The category of points of $\widehat{\N^*}$ is equivalent to the category of ordered groups which are isomorphic to non-trivial subgroups of the additive group $(\Q,\Q_+)$ and non-trivial (injective) homomorphisms between them.
\end{theorem}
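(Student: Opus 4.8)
The plan is to deduce this from Theorem \ref{thm:axiomatization_N*} together with a purely algebraic description of the set-based models of $\T_N$. Recall the standard fact that, for any geometric theory $\mathbb S$, the points of its classifying topos form a category canonically equivalent to the category $\mathbb S\mbox{-mod}(\Set)$ of set-based models. Since Theorem \ref{thm:axiomatization_N*} exhibits $\widehat{\N^*}$ as the classifying topos of $\T_N$, the category of points of $\widehat{\N^*}$ is equivalent to $\T_N\mbox{-mod}(\Set)$, and it therefore suffices to identify the latter with the category of non-trivial ordered subgroups of $(\Q, \Q_+)$. Thus the whole statement reduces to algebra once the classifying-topos input is invoked.

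First I would unwind what a model $G$ of $\T_N$ in $\Set$ is. By definition $G$ is a non-trivial group carrying a total order through $P$ together with an interpretation of $\ne$; the axioms $x\ne x\vdash_x\bot$ and $\top\vdash_{x,y}(x\ne y)\vee(x=y)$ force $\ne$ to be the genuine negation of equality in $\Set$, so a $\T_N$-model homomorphism is precisely an injective, order-preserving group homomorphism, and non-triviality of the domain makes every such homomorphism non-trivial. Next I would extract the two decisive consequences of axiom (ii): any two positive elements $x,y$ are of the form $z^n,z^m$ for a common positive $z$, hence commute, and since in a totally ordered group every element is a positive one or its inverse, $G$ is abelian; moreover the subgroup generated by any two positive elements is cyclic, so every finitely generated subgroup of $G$ is cyclic, i.e. $G$ is locally cyclic. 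Finally $G$ is torsion-free by Lemma \ref{lem:no_cycles}.

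The core of the argument is then the classical fact that a torsion-free, locally cyclic ordered group embeds into $(\Q,\Q_+)$. Concretely I would fix a positive $a\in G$ (one exists because $G$ is non-trivial and totally ordered) and set $\phi(a)=1\in\Q$; for an arbitrary positive $b$, axiom (ii) yields a positive $z$ and integers $n\ge 1$, $m\ge 0$ with $a=z^n$ and $b=z^m$, and one puts $\phi(b)=m/n$, extending by $\phi(g^{-1})=-\phi(g)$. Well-definedness follows by passing to the positive generator of the infinite cyclic group $\langle a,b\rangle$ (cyclic by local cyclicity, infinite by torsion-freeness): any two representations refine through this generator, so $m/n$ is independent of the chosen $z$. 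One then checks that $\phi$ is an injective group homomorphism carrying positive elements to non-negative rationals, hence order-preserving, realizing $G$ as a non-trivial ordered subgroup of $(\Q,\Q_+)$. Conversely every non-trivial subgroup $H\subseteq\Q$ is a $\T_N$-model, since any two positive elements lie in the finitely generated, hence cyclic, subgroup they generate, whose positive generator witnesses axiom (ii).

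Assembling these steps, $G\mapsto\phi(G)$ together with the identification of morphisms in the second paragraph yields an equivalence between $\T_N\mbox{-mod}(\Set)$ and the category of non-trivial ordered subgroups of $(\Q,\Q_+)$ with non-trivial injective order-preserving homomorphisms, which by the first paragraph is the category of points of $\widehat{\N^*}$. The hard part will be the construction and well-definedness of the embedding $\phi$: one must verify that the value $m/n$ does not depend on the chosen common root $z$ and that $\phi$ extends additively and in an order-preserving fashion to all of $G$, and this is exactly where torsion-freeness and the locally cyclic property must be used with care.
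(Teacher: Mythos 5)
Your proposal is correct, and it reaches the statement by a genuinely different route from the paper. The paper itself does not reprove this theorem (it quotes it from \cite{CC5}); what it does prove, immediately afterwards, is the equivalent model-theoretic fact that an ordered group is a $\T_N$-group if and only if it is isomorphic to a non-trivial ordered subgroup of $\Q$, which combined with Theorem \ref{thm:axiomatization_N*} and the equivalence between points of a classifying topos and set-based models is exactly your reduction. The difference lies in how the embedding direction is proved. The paper argues topos-theoretically: since $\T_N$ is classified by $\widehat{\N^*}$, every $\T_N$-group is a filtered colimit of a diagram of copies of $\Z$ with transition maps given by multiplication by positive integers (flat functors are filtered colimits of representables), and the embedding into $\Q$ is obtained by building an explicit cocone $(\lambda_i : \Z \to \Q)$ whose components are multiplications by positive fractions, with monicity of all morphisms used to check independence of the choices. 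You instead stay entirely on the algebraic side: axiom (ii) together with Lemma \ref{lem:no_cycles} makes a $\T_N$-group abelian, locally cyclic and torsion-free, and you construct a H\"older-style embedding $\phi(b)=m/n$ by hand. Your well-definedness sketch does go through: if $a=z^n$, $b=z^m$ and $c$ is the positive generator of $\langle a,b\rangle$, then $\langle a,b\rangle\subseteq\langle z\rangle$ gives $c=z^k$, and torsion-freeness forces $n=kp$, $m=kq$ where $a=c^p$, $b=c^q$, so $m/n=q/p$ independently of the chosen root $z$. For the converse the paper verifies axiom (ii) in subgroups of $\Q$ by two applications of B\'ezout's identity, whereas you invoke cyclicity of finitely generated subgroups of $\Q$ --- essentially the same arithmetic, packaged differently. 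What the paper's route buys is an illustration of the flat-functor machinery that is the theme of the whole paper; what yours buys is a more elementary, self-contained argument in which the structural reasons for the result (local cyclicity and torsion-freeness) are explicit. The loose ends you flag are indeed routine: the base point $a$ must be chosen positive and $\neq 1$; additivity of $\phi$ needs common roots of three elements, obtained by iterating axiom (ii); and since $\phi$ depends on the choice of $a$, the final equivalence is best phrased as essential surjectivity of the full inclusion of the category of non-trivial ordered subgroups of $(\Q,\Q_+)$ into $\T_N\mbox{-mod}(\Set)$, rather than as a functor $G\mapsto\phi(G)$.
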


In fact, the correspondence between the $\T_N$-groups and the ordered groups which are isomorphic to non-trivial subgroups of the additive group $(\Q,\Q_+)$ can also be proved directly, as follows. 

\newpage
\begin{theorem}
	An ordered group $G$ is a $\T_N$-group if and only if it is isomorphic to a non-trivial ordered subgroup of $\Q$. 
\end{theorem}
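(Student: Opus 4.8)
The plan is to reduce the statement to the classical description of torsion-free abelian groups of rank one as the (ordered) subgroups of $\Q$, reading axiom (ii) of Theorem \ref{thm:axiomatization_N*} as a commensurability (rank-one) condition. I would prove the two implications separately.

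For the ``if'' direction, let $G$ be a non-trivial subgroup of $\Q$, equipped with the order inherited from $\Q$. It is immediate that $G$, with $\ne$ interpreted as genuine inequality, is a model of $\overline{\O_{\ne}}$ (totality of the order being inherited from $\Q$) and that axiom (i) holds since $G \ne \{0\}$. The only point requiring an argument is axiom (ii): given positive $x, y \in G$, the subgroup $\langle x, y\rangle$ of $\Q$ is finitely generated, hence contained in some $\tfrac{1}{N}\Z$ and therefore cyclic; choosing a positive generator $z$ of it, we obtain $z \in G$ together with non-negative integers $n, m$ such that $x = nz$ and $y = mz$, which is exactly axiom (ii) (in multiplicative notation, $x = z^n$ and $y = z^m$). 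Thus $G$ is a $\T_N$-group.

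For the ``only if'' direction, let $G$ be a $\T_N$-group. First I would record its structural properties: it is totally ordered (axiom (v) of $\overline{\O}$), torsion-free (Lemma \ref{lem:no_cycles}, or simply because totally ordered groups are torsion-free), and abelian --- indeed axiom (ii) forces any two positive elements to be powers of a common $z$, hence to commute, and by totality every element is a positive element or the inverse of one, so $G$ is commutative. Next I would extract the rank-one condition: for positive $x, y \ne 1$, axiom (ii) yields a positive $z$ and integers $n, m \ge 1$ (both non-zero by torsion-freeness) with $x = z^n$ and $y = z^m$, so that $x^m = y^n$; combined with totality this shows that any two elements of $G$ are commensurable. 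Finally I would build the order-embedding: fixing a positive $g_0 \ne 1$ (provided by axiom (i) together with totality), define $\phi \colon G \to \Q$ by $\phi(x) = p/q$, where $p, q \in \Z$ satisfy $q \ne 0$ and $q\,x = p\,g_0$. Torsion-freeness makes $\phi$ well defined, it is clearly a group homomorphism, and it is injective; choosing $q > 0$ one sees that $x > 1$ iff $p > 0$ iff $\phi(x) > 0$, so $\phi$ is a strictly order-preserving (hence, by totality, order-reflecting) embedding onto a non-trivial subgroup of $\Q$ containing $\phi(g_0) = 1$. Therefore $G$ is isomorphic, as an ordered group, to a non-trivial ordered subgroup of $\Q$.

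The routine verifications (that a subgroup of $\Q$ satisfies the injectivity axioms, and that $\phi$ is a homomorphism) are straightforward. The main obstacle is the forward direction: correctly reading axiom (ii) as the rank-one/commensurability condition, and then checking that the resulting map $\phi$ into $\Q$ is well defined by torsion-freeness and, crucially, order-preserving in both directions. The delicate sign bookkeeping needed to guarantee order-compatibility --- arranging $q > 0$ and tracking the sign of $p$ against the total order on $G$ --- is where care is required.
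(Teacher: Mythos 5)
Your proof is correct, but the ``only if'' direction takes a genuinely different route from the paper's. The paper argues topos-theoretically: since $\T_N$ is classified by $\widehat{\N^*}$, every $\T_N$-group $G$ is a filtered colimit of a diagram consisting of copies of $\Z$ with transition maps given by multiplication by non-zero natural numbers, and the embedding $G \hookrightarrow \Q$ is produced by constructing a compatible cocone $(\lambda_i \colon \Z \to \Q)$ over this diagram (fixing one copy of $\Z$ embedded canonically, extending to the other indices via the joint embedding property of the filtered index category, with monicity of all $\T_N$-model homomorphisms guaranteeing injectivity of the induced map). You instead work entirely at the level of the axioms: you extract abelianness and commensurability from axiom (ii) of Theorem \ref{thm:axiomatization_N*} combined with totality, invoke torsion-freeness (Lemma \ref{lem:no_cycles}), and then build a H\"older-style embedding $x \mapsto p/q$ where $x^q = g_0^p$ for a fixed positive $g_0 \ne 1$; the sign bookkeeping you flag as delicate (choosing $q>0$, then $x>1 \Leftrightarrow p>0$ via axiom (iv) of $\O$ and torsion-freeness) does go through. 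Your argument is more elementary and self-contained: it uses only the axiomatization of $\T_N$ and not the fact that $\T_N$ is classified by $\widehat{\N^*}$, and it makes explicit two structural facts the paper leaves implicit, namely that $\T_N$-groups are automatically abelian and that axiom (ii) is precisely a rank-one condition. What the paper's approach buys is brevity given the machinery already in place, and a clean illustration of how flatness (models as filtered colimits of representables) converts knowledge of the category $\N^*$ into knowledge of all set-based models. In the ``if'' direction the two proofs coincide in substance: the paper verifies axiom (ii) by explicit B\'ezout manipulations, while you quote the equivalent classical fact that finitely generated subgroups of $\Q$ are cyclic.
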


\begin{proof}
From the fact that the theory $\T_N$ is classified by the topos $\widehat{\N^*}$ we know that every $\T_N$-group $G$ is a filtered colimit $colim(D)$ of a diagram $D:{\cal I}\to {\mathbb N}^{\ast}\hookrightarrow \T_N\mbox{-mod}(\Set)$, where $\cal I$ is a
filtered category, since every flat set-valued functor is a filtered colimit of representables. Note that the factorization through $\N^*$ implies that these diagrams simply
consist of certain arrows $n: \Z \rightarrow \Z$ in $\T_N \mbox{-mod}(\Set)$, where $n$ is the homomorphism $1 \mapsto n$ ($n$ being a non-zero natural number). 

So to show that $G$ embeds as an ordered subgroup of $\Q$ in $\T_N \mbox{-mod}(\Set)$, it suffices to construct a cocone $(\lambda_i : D(i) \to \Q)_{i \in \cal
I}$, due to the universality of $G = colim (D)$ and since every $\T_N$-model homomorphism is monic. Recalling that $D(i) = \Z$, it is important to note that $\T_N$-model homomorphisms
$\Z \to \Q$ correspond precisely to multiplication by a strictly positive fraction $\frac{p}{q}:\Z \to \Q$, $1 \mapsto \frac{p}{q}$.

Now, to construct such a cocone, fix any object $i_{0} \in \cal I$, which must exist by definition of filtered category. We define the arrow $\lambda_{i_0} : D(i) = \Z
\to \Q$ to be the canonical embedding of $\Z$ in $\Q$. Given any $j \in \cal I$, to define a homomorphism $\lambda_j: D(j)=\Z \to \Q$ satisfying the required cocone commutation
relations we argue as follows. Using the joint embedding property of $\cal I$, there exists an object $k \in \cal I$ and arrows $i_0 \overset{f}{\rightarrow} k \overset{g}{\leftarrow} j$. We set
$\lambda_k : D(k) = \Z \to \Q$ equal to the unique arrow $\lambda_k$ such that $\lambda_{i_0} = \lambda_k \comp D(g)$. This is indeed possible because $D(g): \Z \to \Z$ is given by multiplication by a non-zero integer
$n$, say, whence $\lambda_k : \Z \to \Q$ can be taken (and this is the unique possible choice) to be multiplication by the element $\frac{1}{n}$. We then set $\lambda_j = \lambda_k \comp D(f)$; concretely, this function is given by multiplication by the element $\frac{m}{n}$, where $D(f):\Z \to \Z$, $1 \mapsto m$.

This construction is independent of the chosen $f$ and $g$, because any two parallel $g,g' : j \to k$ are weakly coequalized by some $h: k \to k'$ in $\cal I$, which
implies $D(g) = D(g')$ in $\T_N \mbox{-mod}(\Set)$ since all morphisms are monic. Finally, it is an easy exercise, using the concrete descriptions of arrows in terms of fractions of natural numbers, to show that this is independent of the chosen $k$, and that the obtained $(\lambda_i : D(i) \to \Q)_{i \in \cal I}$ is indeed a cocone.

Conversely, let us show that any subgroup $(H,H^+) \le (\Q, \Q^+)$ satisfies axiom (ii). Given two elements $x,y \in H^+$, write them as reduced fractions of
positive integers $x = \frac{x'}{a}$ and $y = \frac{y'}{b}$, i.e. $\gcd(x',a) = 1 = \gcd(y',b)$. Using B\'ezout's identity this yields $ma + nx' = 1$, for some integers $m$ and $n$, and similarly for
$y$; hence also $\frac{1}{a}, \frac{1}{b} \in H^+$. 

We are done if $z:=\frac{\gcd(a,b)}{ab} \in H^+$. Indeed, setting $g := \gcd(a,b)$, we have that if $a = rg$ and $b = sg$ for positive integers $r$ and $s$ then $rz = \frac{1}{b}$ and $sz = \frac{1}{a}$, whence $x=(x's)z$ and $y=(y't)z$. 

Now, using again B\'ezout's identity, we obtain the existence of integers $p$ and $q$ such that $g := \gcd(a,b) = pa + qb$. Then $\frac{q}{a} + \frac{p}{b} = \frac{g}{ab} = z$ is in $H^+$, which is what we needed to show.
\end{proof}

\begin{remark}
Note the similarities between the notion of $\T_N$-group and that of \emph{archimedean group}. A totally ordered group $G$ is called archimedean if for any positive $x,y \in G$ there exists a natural number $n \in \N$ such that $x \le ny$ (cf. \cite{FS}). Just as the notion of ${\mathbb T}_{N}$-group, the concept of archimedean group is formalizable within geometric logic. It follows immediately from axiom (ii) that every $\T_N$-group satisfies the archimedean property. In fact, O. H\"older has shown that an ordered group $G$ is archimedean if and only if it is isomorphic to an ordered subgroup of $(\R,+)$ (cf. \cite{FS}).  
\end{remark}

\vspace{1cm}
\textbf{Acknolwedgements:} We warmly thank Alain Connes for useful discussions on the subject matter of this paper.


\emph{Olivia Caramello}

{\small \textsc{Institut des Hautes \'Etudes Scientifiques, 35 route de Chartres, 91440, Bures-sur-Yvette, France}\\
\emph{E-mail address:} \texttt{olivia@ihes.fr}}

\vspace{0.5cm}

\emph{Nicholas Wentzlaff}

{\small \textsc{Centre de Math\'ematiques Laurent Schwartz, \'Ecole polytechnique, 91 128 Palaiseau cedex, France}\\
\emph{E-mail address:} \texttt{n\_w9@hotmail.com}}

\end{document}